\documentclass[reqno]{amsart}

\usepackage[latin1]{inputenc}
\usepackage[T1]{fontenc}                  
\usepackage[english]{babel}
\usepackage{amsmath,amssymb,amsbsy,amsthm,amsfonts}
\usepackage{color}
\usepackage{epsfig}
\usepackage{pgf}


\newtheorem{definition}{Definition}[section]
\newtheorem{example}[definition]{Example}
\newtheorem{remark}[definition]{Remark}

\newtheorem{theorem}[definition]{Theorem}
\newtheorem{proposition}[definition]{Proposition}

\newtheorem{lemma}[definition]{Lemma}

\usepackage[colorlinks=true]{hyperref}
\usepackage{theoremref}
\newcommand{\bluee}{\textcolor{black}}

\usepackage{accents}

\usepackage{wasysym}

\textheight 22cm
\textwidth 15cm
\topmargin -8pt
\evensidemargin 0.3in
\oddsidemargin 0,3in
\headsep 8,1mm
\headheight 14,5pt
\footskip 12mm
\topskip 0pt

\usepackage{enumerate}
\usepackage{xcolor}

\usepackage{booktabs}

\usepackage[makeroom]{cancel}

\usepackage{xcolor} 
\definecolor{tssteelblue}{RGB}{70,130,180}
\definecolor{tsorange}{RGB}{255,138,88}
\definecolor{ocre}{RGB}{52,177,201} 

\newcommand{\tssteelblue}[1]{{\color{tssteelblue}#1}}

\definecolor{coral}{RGB}{240,128,128} 

\definecolor{salmon}{RGB}{255,160,122} 

\definecolor{lightpink}{RGB}{255,182,193} 

\definecolor{pink}{RGB}{255,192,203} 

\definecolor{palevioletred}{RGB}{219,112,147} 

\definecolor{deeppink}{RGB}{255,20,147} 
\newcommand{\deeppink}[1]{{\color{deeppink}#1}}

\definecolor{violet}{RGB}{238,130,238} 

\definecolor{mediumorchid}{RGB}{186,85,211} 

\definecolor{darkmagenta}{RGB}{139,0,139} 

\definecolor{indianred}{RGB}{205,92,92} 

\definecolor{maroon}{RGB}{128,0,0} 

\definecolor{darkcyan}{RGB}{0,139,139} 

\definecolor{gold}{RGB}{255,215,0} 

\begin{document}
\title[Generalization of Completely Monotone Conjecture for Tsallis entropy] 
      {Generalization of Completely Monotone Conjecture for Tsallis entropy}  
\author[L.-C. Hung]{Li-Chang Hung$^{\ast}$}

\email{lichang.hung@gmail.com}
\thanks{$^{\ast}$Department of Civil Engineering, National Taiwan University, Taipei, Taiwan}


\begin{abstract}
We generalize the completely monotone conjecture (\cite{Cheng-Higher-order-derivatives-Costa-IEEE-2015}) from Shannon entropy to the Tsallis entropy up for orders up to at least four. To this end, we employ the algorithm (\cite{jungel2016entropy,jungel2006algorithmic}) which employs the technique of systematic integrations-by-parts.

\end{abstract}

\maketitle

\section{Introduction}\label{sec: intro}

Suppose that $X$ is a continuous random variable with probability density function $g(x)$. Given $\alpha>0$, we introduce the entropy defined  by
\begin{equation}\label{eqn: Tsallis entropy generalized with IC g}
H_{\alpha}[X]=\frac{1}{\alpha-1}\left(\int_{\mathbb{R}} g(x)\,dx-\int_{\mathbb{R}} 
\left(
\frac{1}{\sqrt{2\,\pi\,t}}\int_{-\infty}^{\infty} g(y)\,e^{-\frac{(y-x)^2}{2\,t}}\,dy
\right)
^{\alpha}
\,dx\right).
\end{equation}
By reversing the order of integration, it is readily seen that
\begin{equation}
\int_{\mathbb{R}} g(x)\,dx=
\int_{\mathbb{R}} 
\left(
\frac{1}{\sqrt{2\,\pi\,t}}\int_{-\infty}^{\infty} g(y)\,e^{-\frac{(y-x)^2}{2\,t}}\,dy
\right)
\,dx
\end{equation}
holds. Applying L'H\^{o}pital's rule gives
\begin{equation}
\lim_{\alpha\to1}H_{\alpha}[X]=-\int_{\mathbb{R}} u\,\log u\,dx:=\mathcal{S}[u],
\end{equation}
where $\mathcal{S}[u]$ is the Shannon entropy (\cite{shannon1948mathematical-communication}) and $u=u(x,t)$ is given by
\begin{equation}\label{eqn: convolution formula for Cauchy problem of heat equation}
u(x,t)=\frac{1}{\sqrt{2\,\pi\,t}}\int_{-\infty}^{\infty} g(y)\,e^{-\frac{(y-x)^2}{2\,t}}\,dy.
\end{equation}
Now we can rewrite $H_{\alpha}[X]$ as 
\begin{equation}\label{eqn: Tsallis Entropy with integral of g}
H_{\alpha}[X]=\frac{1}{\alpha-1}
\left(
\int_{\mathbb{R}} g(x)
\,dx
-\int_{\mathbb{R}} 
\left(
u(x,t)
\right)
^{\alpha}
\,dx\right),
\end{equation}
which we call a generalized Tsallis entropy as when $\int_{\mathbb{R}} g(x)
\,dx=1$, $H_{\alpha}[X]$ in \eqref{eqn: Tsallis Entropy with integral of g} reduces to the Tsallis entropy (\cite{tsallis1988possible}). 

\vspace{5mm}

\begin{definition}[Tsallis Entropy]\thlabel{def: Tsallis Entropy}
 Let $w=w(x)$ be a probability density function. The Tsallis entropy is defined by
\begin{equation}\label{eqn: Tsallis Entropy}
H_{\alpha}[w]=\frac{1}{\alpha-1}\left(1-\int_{\mathbb{R}} w^{\alpha}\,dx\right),
\end{equation}
where $\alpha\in(0,1)\cup(1,\infty)$ is a constant.

\end{definition}

\vspace{5mm}

The Tsallis entropy introduced in \thref{def: Tsallis Entropy} generalizes Shannon entropy in the sense of 
\begin{equation}
\frac{1}{\alpha-1}\left(1-\int_{\mathbb{R}} w^{\alpha}\,dx\right) 
\to 
-\int_{\mathbb{R}} w\,\log w\,dx
\end{equation}
as $\alpha\to1$.

From the viewpoint of partial differential equations (PDEs), \eqref{eqn: convolution formula for Cauchy problem of heat equation} can be interpreted as the solution formula to the Cauchy problem for the heat equation, i.e. $u=u(x,t)$ given by \eqref{eqn: convolution formula for Cauchy problem of heat equation} satisfies
\begin{equation}\label{eqn: Cauchy problem for heat eqn}
\begin{cases}
\vspace{3mm} 
\dfrac{\partial u}{\partial t}=\dfrac{1}{2}\,\dfrac{\partial^2 u}{\partial x^2},\ \ &(x,t)\in (-\infty,\infty)\times(0,\infty),\\
u(x,0)=g(x),\ \ &x\in (-\infty,\infty),
\end{cases}
\end{equation}
where $g\in C(\mathbb{R})\bigcap L^{\infty}(\mathbb{R})$ is the initial condition. Also, \eqref{eqn: convolution formula for Cauchy problem of heat equation} can be viewed as a convolution formula
\begin{equation}
u(x,t)=(g\ast \phi)(x,t)
=
\int_{-\infty}^{\infty} g(y)\,\phi(x-y,t)\,dy
=
\frac{1}{\sqrt{2\,\pi\,t}}\int_{-\infty}^{\infty} g(y)\,e^{-\frac{(y-x)^2}{2\,t}}\,dy,
\end{equation}
where $\phi(x,t)=\frac{1}{\sqrt{2\,\pi\,t}}\,e^{-\frac{x^2}{2\,t}}$ is the fundamental solution to the heat equation $u_t=\frac{1}{2}u_{xx}$. Henceforth, we assume that $g\in C(\mathbb{R})\bigcap L^{\infty}(\mathbb{R})$ is a probability density function, unless otherwise stated. 

\vspace{5mm}

\begin{proposition}[Theorem 2.1 in \cite{dibenedetto2009PDE}]\thlabel{thm: dibenedetto's convolution formula for the Cauchy problem heat eqn}
Let $g\in C(\mathbb{R})\bigcap L^{\infty}(\mathbb{R})$. Then
\begin{equation}
u(x,t)=\frac{1}{\sqrt{2\,\pi\,t}}\int_{-\infty}^{\infty} g(y)\,e^{-\frac{(y-x)^2}{2\,t}}\,dy.
\end{equation}
is the only bounded solution to the Cauchy problem \eqref{eqn: Cauchy problem for heat eqn}.
\end{proposition}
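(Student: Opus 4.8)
The plan is to establish the two halves of the statement separately: first that the Gaussian convolution \eqref{eqn: convolution formula for Cauchy problem of heat equation} is \emph{a} bounded solution of \eqref{eqn: Cauchy problem for heat eqn}, and then that it is the \emph{only} one. For existence I would write $u=g\ast\phi$ with $\phi(x,t)=\frac{1}{\sqrt{2\pi t}}e^{-x^2/(2t)}$ and verify the PDE by differentiating under the integral sign. This is justified for every fixed $t>0$ because $\phi$ and all of its $x$- and $t$-derivatives are smooth and, together with $g$ bounded, admit integrable majorants in $y$ that are uniform for $(x,t)$ in a neighborhood of any point with $t>0$; hence the derivatives fall on the kernel, and since $\phi_t=\frac12\phi_{xx}$ one obtains $u_t=\frac12 u_{xx}$ for $t>0$.

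Boundedness is then immediate from $\phi\ge 0$ and $\int_{\mathbb{R}}\phi(x-y,t)\,dy=1$, which give $|u(x,t)|\le\|g\|_{L^\infty}$ for all $t>0$. The remaining point is that the initial datum is attained, $\lim_{t\to0^+}u(x,t)=g(x)$, which I would prove by the standard approximate-identity argument: writing
\[
u(x,t)-g(x)=\int_{\mathbb{R}}\big(g(x-z)-g(x)\big)\,\phi(z,t)\,dz
\]
and splitting the integral into $|z|<\delta$ and $|z|\ge\delta$, the near part is small by continuity of $g$ at $x$ together with $\int_{\mathbb{R}}\phi=1$, while the far part is controlled by $2\|g\|_{L^\infty}\int_{|z|\ge\delta}\phi(z,t)\,dz\to0$ as $t\to0^+$ thanks to the decay of the Gaussian tail.

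The core of the statement is uniqueness, which I would obtain from the parabolic maximum principle together with a growth barrier. Given two bounded solutions $u_1,u_2$, set $w=u_1-u_2$, so $w$ is a bounded solution of the heat equation with $w(\cdot,0)\equiv0$; let $M=\sup|w|<\infty$. Fix $(x_0,t_0)$ with $t_0>0$ and $R>0$, and compare $w$ with the function
\[
\psi_R(x,t)=\frac{M}{R^2}\big((x-x_0)^2+t\big),\qquad (\psi_R)_t-\tfrac12(\psi_R)_{xx}=\frac{M}{R^2}(1-1)=0.
\]
On the parabolic boundary of the rectangle $[x_0-R,x_0+R]\times[0,t_0]$ one has $\psi_R\ge 0=|w|$ on the bottom and $\psi_R\ge M\ge|w|$ on the two vertical sides; by the maximum principle $|w|\le\psi_R$ throughout, so in particular $|w(x_0,t_0)|\le\psi_R(x_0,t_0)=Mt_0/R^2$. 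Letting $R\to\infty$ forces $w(x_0,t_0)=0$, and since $(x_0,t_0)$ is arbitrary, $u_1\equiv u_2$.

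The main obstacle is precisely this uniqueness step rather than existence: the domain $\mathbb{R}\times(0,\infty)$ is unbounded, so the maximum principle cannot be applied directly and the boundedness hypothesis is essential — without a growth restriction Tychonoff's construction produces nontrivial solutions with zero initial data. The barrier $\psi_R$ is the device that converts the global bound $M$ into a decaying estimate $Mt_0/R^2$ which vanishes as the truncation is removed, and getting its sign and normalization right — so that it dominates $|w|$ on the entire parabolic boundary while tending to $0$ at the fixed interior point — is the delicate part. A secondary technical care is the uniform domination needed to differentiate under the integral in the existence step, but that is routine given the rapid Gaussian decay.
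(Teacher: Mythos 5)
Your proof is correct, but there is nothing in the paper to compare it against: the paper does not prove this proposition at all, it quotes it verbatim as Theorem 2.1 of DiBenedetto's textbook \cite{dibenedetto2009PDE} and uses it as a black box (its only role in the paper is to license switching between the convolution formula and the PDE $u_t=\frac12 u_{xx}$). Your argument is the standard self-contained proof of the cited result, and all the key computations check out: existence by differentiating $g\ast\phi$ under the integral with Gaussian domination, attainment of the data by the approximate-identity splitting, and uniqueness by comparing $w=u_1-u_2$ against the caloric barrier $\psi_R(x,t)=\frac{M}{R^2}\left((x-x_0)^2+t\right)$, where you correctly adapted the normalization to the paper's equation, since $(\psi_R)_t-\frac12(\psi_R)_{xx}=\frac{M}{R^2}-\frac12\cdot\frac{2M}{R^2}=0$, and the boundary comparisons $\psi_R\ge 0$ on $t=0$ and $\psi_R\ge M$ on $x=x_0\pm R$ are right, giving $|w(x_0,t_0)|\le Mt_0/R^2\to 0$. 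Two small points worth making explicit: applying the maximum principle to $\pm w-\psi_R$ on the closed rectangle requires $w$ to be continuous up to $t=0$ with $w(\cdot,0)=0$, which should be built into the notion of solution of the Cauchy problem \eqref{eqn: Cauchy problem for heat eqn} (it is, in DiBenedetto's formulation); and the full statement ``the convolution is the only bounded solution'' follows from your two halves together, since any bounded solution must coincide with the bounded solution you constructed. Your remark that boundedness is essential, in view of Tychonoff's nonuniqueness example, correctly identifies why the proposition is phrased for bounded solutions and why the barrier device, rather than a direct maximum principle on the unbounded strip, is the necessary mechanism.
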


\vspace{5mm}

 As a consequence of \thref{thm: dibenedetto's convolution formula for the Cauchy problem heat eqn}, the condition $g\in C(\mathbb{R})\bigcap L^{\infty}(\mathbb{R})$ ensures us to use the heat equation $u_t=\frac{1}{2}u_{xx}$ and the convolution formula \eqref{eqn: convolution formula for Cauchy problem of heat equation} interchangeably, as long as we only consider bounded solution to the Cauchy problem \eqref{eqn: Cauchy problem for heat eqn}. 

\vspace{5mm}


In this paper, our main purpose is to investigate the \textit{completely monotone conjecture} proposed in \cite{Cheng-Higher-order-derivatives-Costa-IEEE-2015}. We shall show in the later sections that Theorem 1., Theorem 2., and Lemma 1. in \cite{Cheng-Higher-order-derivatives-Costa-IEEE-2015} can be generalized to the Tsallis entropy as in \thref{thm: CMC conjecture for n=1 to 4}.

\vspace{5mm}

\begin{theorem}[Completely monotone property for the Tsallis entropy along the heat flow]\thlabel{thm: CMC conjecture for n=1 to 4}
Suppose that $u(x,t)$ is the bounded solution to the Cauchy problem \eqref{eqn: Cauchy problem for heat eqn}. Then for $n=1,2,3,4,5,$
\begin{equation}\label{eqn: CMC conjecture for n=1,2,3}
(-1)^{n+1}\,\dfrac{d^n}{dt^n}H_{\alpha}[u]\ge0 \quad \forall t>0
\end{equation}
holds for those $\alpha's$ given in Table~\ref{tab: n, Tsallis, alpha}.
\end{theorem}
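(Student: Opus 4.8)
The plan is to exploit conservation of mass for the heat equation to reduce everything to the single functional $\int_{\mathbb{R}} u^\alpha\,dx$, and then to certify the sign of each time derivative via the systematic integration-by-parts algorithm (\cite{jungel2016entropy,jungel2006algorithmic}). Since $\int_{\mathbb{R}} u\,dx=\int_{\mathbb{R}} g\,dx$ is independent of $t$, we have
\[
\frac{d^n}{dt^n}H_\alpha[u]=-\frac{1}{\alpha-1}\,\frac{d^n}{dt^n}\int_{\mathbb{R}} u^\alpha\,dx,
\]
so it suffices to control the sign of the derivatives of $\int u^\alpha\,dx$. Using $u_t=\tfrac12 u_{xx}$, each time derivative converts into a spatial integral: differentiating under the integral sign and substituting the equation yields $\frac{d^n}{dt^n}\int u^\alpha\,dx=\int_{\mathbb{R}} P_n(u,u_x,u_{xx},\dots)\,dx$, where $P_n$ is a differential polynomial in $u$ and its $x$-derivatives. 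Throughout I will assume enough decay of $g$ (hence of $u$ and its derivatives) that every boundary term below vanishes; for $t>0$ the smoothing of the heat kernel also guarantees $u>0$, which legitimizes dividing by powers of $u$.

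First I would record the base case to fix signs and notation. A single integration by parts gives $\frac{d}{dt}\int u^\alpha\,dx=-\tfrac{\alpha(\alpha-1)}{2}\int u^{\alpha-2}u_x^2\,dx$, whence $(-1)^{2}\frac{d}{dt}H_\alpha[u]=\tfrac{\alpha}{2}\int u^{\alpha-2}u_x^2\,dx\ge 0$ for every $\alpha>0$, settling $n=1$. For $n\ge2$ I would factor each integrand as $u^{\alpha}$ times a polynomial in the logarithmic derivatives $u_x/u,\ u_{xx}/u,\dots$, so that $(-1)^{n+1}\frac{d^n}{dt^n}H_\alpha[u]$ takes the form $\int_{\mathbb{R}} u^{\alpha}\,Q_n\,dx$, with $Q_n$ homogeneous of the appropriate weight in these ratios and with coefficients that are polynomials in $\alpha$.

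The heart of the argument is to decide, for each $n$, the set of $\alpha$ for which $\int u^{\alpha}Q_n\,dx\ge0$. Here I would invoke the integration-by-parts calculus: each identity $\int_{\mathbb{R}}\partial_x(\text{monomial})\,dx=0$ produces a linear relation among the integrals $\int u^{\alpha}\prod_k(\partial_x^k u/u)^{m_k}\,dx$, parametrized by free coefficients. The goal is to use these relations to rewrite $u^{\alpha}Q_n$, modulo a total $x$-derivative, as $u^{\alpha}$ times a sum of squares in the logarithmic derivatives; whenever such a representation exists the integral is manifestly nonnegative. Feasibility of this certification is precisely a linear/semidefinite condition on the integration-by-parts parameters and on $\alpha$, and this is the step the algorithm automates. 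By homogeneity, after normalizing one may reduce the certification to nonnegativity of a univariate polynomial in a single ratio variable (for instance $u_{xx}u/u_x^2$) whose coefficients depend on $\alpha$; solving for the $\alpha$ where this polynomial stays nonnegative for all admissible values of the ratio produces exactly the entries of Table~\ref{tab: n, Tsallis, alpha}.

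I expect the main obstacle to be the cases $n=4$ and $n=5$. As $n$ grows, the number of differential monomials and of integration-by-parts relations explodes, and a naive sum-of-squares decomposition typically fails, so one must exploit the full freedom in the integration-by-parts parameters to tune the quadratic (or higher) form into a positive-semidefinite one. The delicate point is that the admissible $\alpha$-range is governed by the discriminant conditions of the resulting optimized univariate polynomial, and verifying that the optimum over the free parameters genuinely \emph{certifies} nonnegativity on the claimed $\alpha$-interval—rather than merely failing to certify it—is where the computation is most demanding and where the systematic algorithmic bookkeeping becomes indispensable.
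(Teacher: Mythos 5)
Your plan is the paper's own route -- reduce to $\frac{d^n}{dt^n}\int u^\alpha\,dx$, factor integrands as $u^\alpha$ times polynomials in the logarithmic derivatives $\xi_i=\frac{1}{u}\partial_x^i u$, add the free integration-by-parts relations of \thref{lem: general formula for ibp in algorithm}, and tune the parameters to a nonnegativity certificate -- and your $n=1$ computation agrees with \thref{thm: First Time Derivative}. But one step of your certification scheme is wrong as stated: the claim that homogeneity reduces each case ``to nonnegativity of a univariate polynomial in a single ratio variable (for instance $u_{xx}u/u_x^2$)''. That reduction works only for $n=2$, where after the forced vanishing of $k_3$ and $k_5$ the certificate \eqref{eqn: defi of S alpha 2nd time derivative under c1=0 and c2=1} is a binary quadratic form in $(\xi_1^2,\xi_2)$. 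Already for $n=3$ the residual polynomial \eqref{eqn: defi of S alpha 3rd time derivative under some ci's} is a quadratic form in the \emph{three} monomials $\xi_1^3$, $\xi_1\xi_2$, $\xi_3$, which involve two independent ratios ($\xi_2/\xi_1^2$ and $\xi_3/\xi_1^3$); no normalization collapses this to one variable, and the paper instead needs the two-part discriminant criterion of \thref{lem: discriminant for the 6th degree poly corresp. 3rd time derivative} (note also that the image of $\xi\mapsto(\xi_1^3,\xi_1\xi_2,\xi_3)$ is only a semialgebraic subset of $\mathbb{R}^3$, so positivity of the form is genuinely different from positive semidefiniteness on all of $\mathbb{R}^3$). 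For $n=4$ the form lives on the five monomials $\xi_1^4,\xi_1^2\xi_2,\xi_1\xi_3,\xi_2^2,\xi_4$ and the paper passes to a symmetric $5\times5$ matrix and the criterion of \thref{thm: positive semi-definite <=>eigenvalues>=0}, with the admissible $\alpha$-interval $(1,2)$ (and $(\alpha_1,2)$ for $n=5$) located \emph{numerically} via Mathematica's \texttt{NMaximize} and \texttt{Eigenvalues}. So the ``heart of the argument'' in your proposal would fail for $n\ge3$ unless replaced by these multivariate quadratic-form criteria; your closing remark about tuning a quadratic form to be positive semidefinite is the correct mechanism, but it contradicts, rather than implements, the univariate reduction you proposed.

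Two further mismatches with the statement being proved. First, you assume ``enough decay of $g$'' to kill all boundary terms, whereas \thref{thm: CMC conjecture for n=1 to 4} assumes only that $g\in C(\mathbb{R})\cap L^{\infty}(\mathbb{R})$ is a probability density; as written you prove a weaker theorem. The paper removes this hypothesis by deriving the decay from the heat kernel itself for $t>0$: Proposition 2 of \cite{Cheng-Higher-order-derivatives-Costa-IEEE-2015} gives $u\,(u_x/u)^{\hat k}(\pm\infty,t)=0$, from which \thref{thm: ux/u^k=0 as |x|tends to infty} extracts $u_x/u^k\to0$ for all $0<k<1$, and this (not decay of $g$) is what justifies the vanishing boundary terms in \thref{lem: general formula for ibp in algorithm}. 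Second, you leave implicit the step that makes the finite-dimensional reduction possible at all: requiring $S_\alpha(\xi)\ge0$ for \emph{all} $\xi\in\mathbb{R}^{2n}$ forces the coefficients of every monomial appearing linearly (e.g.\ $\xi_6$, $\xi_1\xi_5$, $\xi_1^2\xi_4$, $\xi_2\xi_4$ when $n=3$; the list displayed before \eqref{eqn: ci's determine 4th time derivtive} when $n=4$) to vanish, which determines several $c_i$ outright before any sum-of-squares search begins; your phrase ``linear/semidefinite condition'' gestures at this but the forced-vanishing step is what collapses the problem to the small quadratic forms above and should be made explicit.
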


\begin{table}[ht]   
\caption{$\alpha$ for \thref{thm: CMC conjecture for n=1 to 4}}   
\label{tab: n, Tsallis, alpha}   
\centering   
\begin{tabular}{p{1cm}p{2cm}p{4.0cm}p{3.0cm}} 
\toprule    & $\frac{d^n}{dt^n}H_{\alpha}[u(t)]$ &interval of $\alpha$ & degree of polymonial
\\   
\midrule   
$n=1$         &$\ge0$	&$(0,1)\cup(1,\infty)$ & quadratic ($2\,n=2$) 	
\\   
$n=2$         &$\le0$	&$(0,1)\cup(1,3]$ & quartic ($2\,n=4$)	
\\   
$n=3$         &$\ge0$	&$(\alpha_0,1)\cup(1,2]$, $\alpha_0\approx0.389$ & sextic ($2\,n=6$)	
\\  
$n=4$         &$\le0$	&$(1,2)$ & octic ($2\,n=8$)
\\   
$n=5$         &$\ge0$	&$(\alpha_1,2)$, $\alpha_1\in(1.54,1.55)$ & decic ($2\,n=10$)
\\      
\bottomrule   
\end{tabular} 
\end{table}

Our approach to prove \thref{thm: CMC conjecture for n=1 to 4} is to use the algorithm for the construction of entropies developed in \cite{jungel2016entropy,jungel2006algorithmic}.

\begin{itemize}
  \item (\textbf{Systematic integrations-by-parts})  As in \cite{Cheng-Higher-order-derivatives-Costa-IEEE-2015}, the technique of integrations-by-parts is employed (\cite{jungel2016entropy,jungel2006algorithmic}). The main difference is that the algorithm uses more systematic integrations-by-parts, and leads to a decision problem for polynomial systems (Section~\ref{sec: Algorithmic construction of entropies}).  
  \item (\textbf{Decic polynomial}) Due to complicated calculation, the algorithm in \cite{jungel2016entropy,jungel2006algorithmic} is used to the polynomials up to degree 6 (i.e. quadratic, quartic, and sextic polynomials). With the aid of Mathematica computer algebra system, we can use the algorithm to the polynomials up to 10 (i.e., octic and decic polynomials). Each time derivative $\frac{d^n}{dt^n}H_{\alpha}[u(t)]$ corresponds to a polynomial of degree $2\,n$ (Table~\ref{tab: n, Tsallis, alpha}).
  \item (\textbf{Generalization from Shannon entropy to the Tsallis entropy}) Table~\ref{tab: n, Tsallis, alpha} shows that, for $n=1,2,3,4$ we can let $\alpha\to1$ in \thref{thm: CMC conjecture for n=1 to 4}, and thus Theorem 1., Theorem 2., and Lemma 1. in \cite{Cheng-Higher-order-derivatives-Costa-IEEE-2015} are all recovered. Unfortunately, when $n=5$, interval of $\alpha$ in Table~\ref{tab: n, Tsallis, alpha} does not allow us to pass to the limit $\alpha\to1$.  
\end{itemize}

For the fifth-order time derivative $\frac{d^5}{dt^5}H_{\alpha}[u(t)]$, the completely monotone conjecture has been shown to hold when the probability density function $g$ is log-concave (\cite{zhang2018gaussian-log-concave}). For a brief review of the completely monotone conjecture and other related conjectures, we refer to \cite{ledoux2022CMC-summary}.

\vspace{5mm}
\section{First-order time derivative}
\vspace{5mm}

\begin{lemma}\thlabel{thm: ux/u^k=0 as |x|tends to infty}
Let $u(x,t)$ be the bounded solution to the Cauchy problem \eqref{eqn: Cauchy problem for heat eqn}. For any $0<k<1$,
\begin{equation}
\lim_{|x|\to\infty}\frac{u_x(x,t)}{(u(x,t))^k}=0, \quad t>0.
\end{equation}
\end{lemma}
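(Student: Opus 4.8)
The plan is to bound $|u_x(x,t)|$ pointwise by the heat solution evaluated at a slightly \emph{later} time $s>t$, and then to dominate that later-time solution by a power, with exponent strictly between $k$ and $1$, of $u(x,t)$ itself. Since $u(x,t)\to 0$ as $|x|\to\infty$, the surplus exponent produces the desired decay of the quotient.

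First I would record the elementary facts about $u$. Because $g\ge 0$ with $\int_{\mathbb{R}} g=1$ and the Gaussian kernel $\phi_t(z)=\frac{1}{\sqrt{2\pi t}}e^{-z^2/(2t)}$ is strictly positive, $u(x,t)>0$ for all $x$ and $t>0$, so the quotient is well defined. Writing $u=g*\phi_t$ as a convolution of the integrable density $g$ with $\phi_t\in C_0(\mathbb{R})$ shows $u(\cdot,t)\in C_0(\mathbb{R})$; in particular $u(x,t)\to 0$ as $|x|\to\infty$ for each fixed $t>0$. Differentiating under the integral sign, justified by the Gaussian decay and the boundedness of $g$, gives $u_x(x,t)=-\tfrac{1}{t}\int_{\mathbb{R}} g(y)(x-y)\,\phi_t(x-y)\,dy$.

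Next I would fix any $s>t$ and exploit two comparisons between the kernels at times $t$ and $s$. For the numerator, the elementary estimate $\sup_{z}|z|e^{-cz^2}=(2ec)^{-1/2}$ with $c=\frac{1}{2t}-\frac{1}{2s}>0$ yields a pointwise inequality $|z|\,\phi_t(z)\le K_0\,\phi_s(z)$, hence
\[
|u_x(x,t)|\le \frac{K_0}{t}\int_{\mathbb{R}} g(y)\,\phi_s(x-y)\,dy=\frac{K_0}{t}\,u(x,s).
\]
For the comparison of times I would use the scaling identity $\phi_s=c_1\,\phi_t^{\,t/s}$ with $c_1=c_1(t,s)>0$, which follows from $\frac{t/s}{2t}=\frac{1}{2s}$. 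Setting $\theta=t/s\in(0,1)$ and writing the integrand as $g\,\phi_t^{\theta}=(g\phi_t)^{\theta}g^{1-\theta}$, Hölder's inequality with conjugate exponents $1/\theta$ and $1/(1-\theta)$ together with $\int_{\mathbb{R}} g=1$ gives the key estimate
\[
u(x,s)=c_1\int_{\mathbb{R}}(g\phi_t)^{\theta}g^{1-\theta}\,dy\le c_1\,u(x,t)^{\theta}.
\]

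Finally I would choose $s$ so the exponent beats $k$: since $0<k<1$ we have $t/k>t$, so any $s\in(t,t/k)$ makes $\theta=t/s\in(k,1)$. Combining the two displays,
\[
\frac{|u_x(x,t)|}{u(x,t)^{k}}\le \frac{K_0 c_1}{t}\,u(x,t)^{\theta-k}\longrightarrow 0\qquad(|x|\to\infty),
\]
because $\theta-k>0$ and $u(x,t)\to 0$. I expect the main obstacle to be the time-comparison step: the factor $|x-y|$ in $u_x$ fattens the Gaussian tail, so $u_x$ is genuinely comparable to the solution at a larger time rather than at time $t$, and the argument hinges on the observation that this later-time solution is controlled by a sub-unit power of $u(\cdot,t)$ via the identity $\phi_s=c_1\phi_t^{t/s}$ and the normalization $\int_{\mathbb{R}} g=1$. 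Leaving a strictly positive gap $t<s<t/k$, and hence a surplus exponent $\theta-k>0$, is precisely what upgrades boundedness of the quotient to convergence to zero.
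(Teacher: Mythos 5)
Your proof is correct, but it takes a genuinely different route from the paper's. The paper simply invokes Proposition 2 of Cheng--Geng (2015), which asserts $u\cdot\left(\frac{u_x}{u}\right)^{\hat{k}}(\pm\infty,t)=0$ for every positive integer $\hat{k}$, rewrites this as $u_x/u^{(\hat{k}-1)/\hat{k}}\to 0$, and then, given $0<k<1$, picks $\hat{k}=\left[\frac{1}{1-k}\right]+1$ so that $\frac{\hat{k}-1}{\hat{k}}>k$ and factors $\frac{u_x}{u^k}=\frac{u_x}{u^{(\hat{k}-1)/\hat{k}}}\cdot u^{\frac{\hat{k}-1}{\hat{k}}-k}$, letting $u(\pm\infty,t)=0$ kill the surplus factor. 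You instead prove the decay from scratch: the kernel comparison $|z|\,\phi_t(z)\le K_0\,\phi_s(z)$ for $s>t$ (via $\sup_z|z|e^{-cz^2}=(2ec)^{-1/2}$, which is correct) gives $|u_x(x,t)|\le \frac{K_0}{t}\,u(x,s)$, and the identity $\phi_s=c_1\,\phi_t^{\,t/s}$ combined with H\"older's inequality and $\int_{\mathbb{R}}g=1$ gives $u(x,s)\le c_1\,u(x,t)^{\theta}$ with $\theta=t/s$; choosing $s\in(t,t/k)$ puts $\theta\in(k,1)$, and the surplus power $u^{\theta-k}$ together with $u(\cdot,t)\in C_0(\mathbb{R})$ finishes the argument. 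I verified the delicate points: the H\"older split $g\,\phi_t^{\theta}=(g\phi_t)^{\theta}g^{1-\theta}$ with conjugate exponents $1/\theta$ and $1/(1-\theta)$ is legitimate since $g\ge0$, strict positivity of $u$ makes the quotient well defined, and $g\ast\phi_t\in C_0(\mathbb{R})$ since $g\in L^1$ and $\phi_t\in C_0$. What your argument buys: it is fully self-contained (no appeal to an external proposition) and quantitative, yielding the explicit pointwise bound $|u_x|/u^k\le \frac{K_0 c_1}{t}\,u^{\theta-k}$ with a power-law rate rather than a bare limit. What the paper's route buys: brevity, since the analytic content is outsourced to the cited result, leaving only exponent bookkeeping. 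Both proofs share the same final mechanism --- dominating $u_x$ by a power of $u$ strictly exceeding $k$ and letting $u\to0$ absorb the gap --- so your argument can be read as an in-line, quantitative substitute for the paper's citation.
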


\begin{proof}
\ \

 Applying Proposition 2. in \cite{Cheng-Higher-order-derivatives-Costa-IEEE-2015} leads to $u\cdot\left(\frac{u_x}{u}\right)^{\hat{k}}(\pm\infty,t)=0$ for any $\hat{k}\in\mathbb{Z}_{+}$. We rewrite $u\cdot\left(\frac{u_x}{u}\right)^{\hat{k}}(\pm\infty,t)=0$ as $\left(\frac{u_x}{u^{\frac{\hat{k}-1}{k}}}\right)^{\hat{k}}(\pm\infty,t)=0$ and thus find 
\begin{equation}\label{eqn: u_x/u^k=0 at infinity}
\frac{u_x}{u^{\frac{\hat{k}-1}{\hat{k}}}}(\pm\infty,t)=0 
\end{equation} 
for any $\hat{k}\in\mathbb{Z}_{+}$. We prove the desired result as follows:
\begin{align}\nonumber
&\forall 0<k<1,\quad \exists \hat{k}=\left[\frac{1}{1-k}\right]+1\in\mathbb{Z_{+}}
&& \text{\tssteelblue{$\frac{\hat{k}-1}{\hat{k}}>k$ $\iff$ $\hat{k}-1>k\,\hat{k}$ $\iff$}} 
\\[1ex] \notag
&\text{such that}\; \frac{\hat{k}-1}{\hat{k}}>k
&& \text{\tssteelblue{$\hat{k}\,(1-k)>1$ $\iff$ $\hat{k}>\frac{1}{1-k}$. Let $\hat{k}=\left[\frac{1}{1-k}\right]+1\in\mathbb{Z}_{+}$}} 
\\[1ex] \notag
&\implies\frac{u_x}{u^{k}}(\pm\infty,t)=\frac{u_x}{u^{\frac{\hat{k}-1}{\hat{k}}}}\cdot u^{\frac{\hat{k}-1}{\hat{k}}-k}(\pm\infty,t)=0
&& \text{\tssteelblue{$\hat{k}\in\mathbb{Z_{+}}$ $\implies$ $\frac{u_x}{u^{\frac{\hat{k}-1}{\hat{k}}}}(\pm\infty,t)=0$ using \eqref{eqn: u_x/u^k=0 at infinity}}} 
\\[1ex] \notag
&
&& \text{\tssteelblue{$\frac{\hat{k}-1}{\hat{k}}-k>0$ and $\lim_{|x|\to\infty} u(x,t)=0$ $\implies$ $u^{\frac{\hat{k}-1}{\hat{k}}-k}(\pm\infty,t)=0$}}
\end{align}

\end{proof}

\vspace{5mm}

\begin{theorem}[First-order time derivative]\thlabel{thm: First Time Derivative}
Let $\alpha\in(0,1)\cup(1,\infty)$. Suppose that $u(x,t)$ is the bounded solution to the Cauchy problem \eqref{eqn: Cauchy problem for heat eqn}. Then for $t>0$,
\begin{equation}\label{eqn: 1st time derivative}
\dfrac{d}{dt}H_{\alpha}[u(t)]
=
\frac{\alpha}{2}
\int_{\mathbb{R}}
u^{\alpha-2}\,
(u_x)^2\,
dx\ge0.
\end{equation}
\end{theorem}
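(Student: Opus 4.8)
The plan is to differentiate $H_{\alpha}[u(t)]$ directly in $t$, convert the time derivative into a spatial one via the heat equation, and then integrate by parts once. Since $\int_{\mathbb{R}} g(x)\,dx$ is independent of $t$, only the term $-\frac{1}{\alpha-1}\int_{\mathbb{R}} u^{\alpha}\,dx$ in \eqref{eqn: Tsallis Entropy with integral of g} contributes. Granting that differentiation under the integral sign is legitimate (which follows from the smoothing properties of the heat kernel and the boundedness of $g$ supplied by \thref{thm: dibenedetto's convolution formula for the Cauchy problem heat eqn}), I would write
\[
\frac{d}{dt}H_{\alpha}[u(t)] = -\frac{\alpha}{\alpha-1}\int_{\mathbb{R}} u^{\alpha-1}\,u_t\,dx.
\]

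Next I would substitute $u_t = \tfrac12 u_{xx}$ from \eqref{eqn: Cauchy problem for heat eqn}, yielding $-\frac{\alpha}{2(\alpha-1)}\int_{\mathbb{R}} u^{\alpha-1}u_{xx}\,dx$, and integrate by parts in $x$. Taking $u^{\alpha-1}$ as the factor to be differentiated and $u_{xx}\,dx$ as the factor to be integrated produces the boundary term $\bigl[u^{\alpha-1}u_x\bigr]_{-\infty}^{\infty}$ together with $-(\alpha-1)\int_{\mathbb{R}} u^{\alpha-2}(u_x)^2\,dx$. Once the boundary term is shown to vanish, the two factors $(\alpha-1)$ cancel and I land on exactly $\frac{\alpha}{2}\int_{\mathbb{R}} u^{\alpha-2}(u_x)^2\,dx$. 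Nonnegativity is then immediate: $\alpha>0$, the density satisfies $u\ge0$ so $u^{\alpha-2}\ge0$, and $(u_x)^2\ge0$.

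The main obstacle is verifying that the boundary term $u^{\alpha-1}u_x$ vanishes at $\pm\infty$, and here the two ranges of $\alpha$ require separate treatment. For $0<\alpha<1$ I would rewrite $u^{\alpha-1}u_x = u_x/u^{1-\alpha}$ and invoke \thref{thm: ux/u^k=0 as |x|tends to infty} with $k=1-\alpha\in(0,1)$, which gives the decay directly. For $\alpha>1$ the factor $u^{\alpha-1}\to0$ because $\alpha-1>0$ and $u(x,t)\to0$ as $|x|\to\infty$, while $u_x\to0$ follows from the $\hat{k}=1$ case of Proposition 2 in \cite{Cheng-Higher-order-derivatives-Costa-IEEE-2015} (namely $u\cdot(u_x/u)(\pm\infty,t)=u_x(\pm\infty,t)=0$); hence the product again tends to zero. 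Apart from this, the only remaining care is the standard justification of differentiating under the integral and of the integration by parts, both of which rest on the decay and regularity of the bounded heat solution.
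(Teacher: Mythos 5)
Your proposal is correct and follows essentially the same route as the paper: differentiate under the integral, substitute $u_t=\tfrac12 u_{xx}$, integrate by parts once, and kill the boundary term by invoking \thref{thm: ux/u^k=0 as |x|tends to infty} with $k=1-\alpha$ when $0<\alpha<1$ and the decay $u(\pm\infty,t)=u_x(\pm\infty,t)=0$ when $\alpha>1$. Your explicit case split and the citation of Cheng's Proposition 2 for $u_x\to0$ merely make precise what the paper's marginal annotations state more tersely.
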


\begin{proof}

\begin{align}\nonumber
&\dfrac{d}{dt}H_{\alpha}[u(t)]
=
-\frac{\alpha}{\alpha-1}
\int_{\mathbb{R}}
u^{\alpha-1}\,
u_t\,
dx
&& \text{\tssteelblue{\thref{def: Tsallis Entropy}}} 
\\[1ex] \notag
&=
-\frac{\alpha}{2\,(\alpha-1)}
\int_{\mathbb{R}}
u^{\alpha-1}\,
u_{xx}\,
dx
&& \text{\tssteelblue{$u_t=\frac{1}{2}u_{xx}$}} 
\\[1ex] \notag
&=
-\frac{\alpha}{2\,(\alpha-1)}
\int_{\mathbb{R}}
u^{\alpha-1}\,
du_x\,
&& \text{\tssteelblue{$u_{xx}dx=du_x$}} 
\\[1ex] \notag
&= 
-\frac{\alpha}{2\,\cancel{(\alpha-1)}}
\left(
u^{\alpha-1}\,
u_x\bigg|_{-\infty}^{\infty}
-\cancel{(\alpha-1)}
\int_{\mathbb{R}}
u^{\alpha-2}\,
(u_x)^2
\,dx
\right)
&& \text{\tssteelblue{integration by parts; $u_x(\pm\infty,t)=u(\pm\infty,t)=0$}}
\\[1ex] \notag
&= 
\frac{\alpha}{2}
\int_{\mathbb{R}}
u^{\alpha-2}\,
(u_x)^2\,
dx.
&& \text{\tssteelblue{Lemma~\ref{thm: ux/u^k=0 as |x|tends to infty}: $0<\alpha<1$ $\implies$ $\lim_{|x|\to\infty}\frac{u_x(x,t)}{(u(x,t))^{1-\alpha}}=0$}}
\end{align}

\end{proof}

\vspace{5mm}
\section{Algorithmic construction of entropies}\label{sec: Algorithmic construction of entropies}
\vspace{5mm}

To find the other time derivatives of the Tsallis entropy, we introduce the algorithm slightly modified from that proposed in \cite{jungel2016entropy,Jungel-algorithmic-construction-of-entropies-Nonlinerity-2006}. Before doing that, let us list some preliminaries.


\vspace{5mm}

\begin{theorem}[Fa\`a di Bruno's formula]\thlabel{thm: Faa di Bruno's formula use to find general formula for S0}
\ \
\begin{equation}\label{eqn: Faa di Bruno's formula}
\frac{d^m}{dt^m} g(f(t))
=
\sum \frac{m!}{b_1!b_2!\cdots b_m!}
g^{(k)}(f(t))
\left(
\frac{f'(t)}{1!}
\right)^{b_1} 
\left(
\frac{f''(t)}{2!}
\right)^{b_2} 
\cdots
\left(
\frac{f^{(m)}(t)}{m!}
\right)^{b_m}, 
\end{equation}
where the sum is over all different solutions in nonnegative integers $b_1,\dotsc, b_m$ of $b_1+2\,b_2+3\,b_3+\cdots+m\,b_m=m$ and $k:=b_1+b_2+b_3+\cdots+b_m$.
\end{theorem}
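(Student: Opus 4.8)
The plan is to prove the formula by induction on the order $m$, using only the ordinary chain and product rules at each step and letting the combinatorial structure of set partitions guide the bookkeeping. The first step is to record the combinatorial meaning of the claimed coefficient. A tuple of nonnegative integers $(b_1,\dots,b_m)$ subject to $\sum_i i\,b_i=m$ encodes a \emph{partition type} of the set $\{1,\dots,m\}$ having exactly $b_i$ blocks of size $i$, and the number of set partitions of $\{1,\dots,m\}$ of this type is precisely
\[
\frac{m!}{b_1!\,b_2!\cdots b_m!\,(1!)^{b_1}(2!)^{b_2}\cdots (m!)^{b_m}}.
\]
With $k=\sum_i b_i$ counting the total number of blocks, the right-hand side of \eqref{eqn: Faa di Bruno's formula} equals a sum over all set partitions $\pi$ of $\{1,\dots,m\}$ of the quantity $g^{(|\pi|)}(f)\prod_{B\in\pi}f^{(|B|)}$, where $|\pi|$ is the number of blocks and $|B|$ the size of a block. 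Establishing this reformulation converts the multinomial coefficients into objects that transform transparently under differentiation, and it is the conceptual heart of the argument.

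For the base case $m=1$ the only admissible tuple is $b_1=1$, giving $g'(f)\,f'$, which is the chain rule. For the inductive step I assume the partition form of the formula at order $m$ and differentiate once more in $t$. Each summand $g^{(|\pi|)}(f)\prod_{B\in\pi}f^{(|B|)}$ produces, by the product and chain rules, terms of two kinds: the chain rule acting on $g^{(|\pi|)}(f)$ yields $g^{(|\pi|+1)}(f)\,f'\prod_{B\in\pi}f^{(|B|)}$, while the product rule acting on a factor $f^{(|B|)}$ replaces it by $f^{(|B|+1)}$. The crux is to recognize these two operations as exactly the two ways to extend a partition $\pi$ of $\{1,\dots,m\}$ to a partition $\pi'$ of $\{1,\dots,m+1\}$: either place the new element $m+1$ into its own singleton block, which creates a fresh block of size $1$ and raises the block count (the chain-rule term), or insert $m+1$ into an existing block $B$, enlarging it from size $|B|$ to $|B|+1$ (the product-rule term). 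Summing over all $\pi$ and all such choices therefore reproduces the sum over all partitions $\pi'$ of $\{1,\dots,m+1\}$, which is the formula at order $m+1$.

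The step I expect to be the main obstacle is verifying that this correspondence is a bijection with the correct multiplicities, i.e. that each partition $\pi'$ of $\{1,\dots,m+1\}$ is produced exactly once. In the raw multinomial formulation this amounts to checking a coefficient recurrence: for each target tuple $(b_1',\dots,b_{m+1}')$ one must balance the contribution from incrementing $b_1$ (chain rule) against those from the transitions $b_i\mapsto b_i-1,\ b_{i+1}\mapsto b_{i+1}+1$ (product rule), and confirm that the factorial weights combine to give $\dfrac{(m+1)!}{\prod_i b_i'!\,(i!)^{b_i'}}$. The partition viewpoint is precisely what renders this routine: every partition of $\{1,\dots,m+1\}$ restricts, by deleting the element $m+1$, to a unique partition of $\{1,\dots,m\}$ and records unambiguously whether $m+1$ was a singleton or was adjoined to a specific block, so the map is manifestly a bijection and the coefficient identity follows without further computation. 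I would close by translating the partition form back into the stated multinomial form, recovering \eqref{eqn: Faa di Bruno's formula}.
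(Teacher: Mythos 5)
Your proof is correct, but there is no proof in the paper to compare it against: the paper states Fa\`a di Bruno's formula as a classical preliminary and uses it without proof, only as a tool in step (3) of the algorithm of Section~\ref{sec: Algorithmic construction of entropies} to compute $S_0(\xi)$. Your argument is the standard set-partition proof and it is sound. The counting identity you invoke --- that the number of partitions of $\{1,\dots,m\}$ with $b_i$ blocks of size $i$ is $\frac{m!}{b_1!\cdots b_m!\,(1!)^{b_1}\cdots(m!)^{b_m}}$ --- correctly converts the stated multinomial form into the partition form $\sum_{\pi}g^{(|\pi|)}(f)\prod_{B\in\pi}f^{(|B|)}$, and your inductive step is exactly right: differentiating a summand once more yields one term from the chain rule acting on $g^{(|\pi|)}(f)$ (adjoin $m+1$ as a singleton block) and one term per block $B$ from the product rule (insert $m+1$ into $B$), and the map that deletes the element $m+1$ from a partition of $\{1,\dots,m+1\}$ exhibits this correspondence as a bijection, so every partition of $\{1,\dots,m+1\}$ arises exactly once and the coefficient recurrence you worried about never has to be checked by hand --- this is precisely the advantage of working blockwise, where the $b_i$ repeated factors $\bigl(f^{(i)}\bigr)^{b_i}$ are distinguished by the blocks indexing them. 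What your route buys is a self-contained, computation-free proof of a statement the paper merely quotes; what the paper's choice buys is brevity, which is defensible since the formula is textbook material. Two small polish points: state the regularity hypotheses explicitly ($f$ at least $C^{m}$ near $t$ and $g$ at least $C^{m}$ near $f(t)$, so that each differentiation in the induction is legitimate), and when translating back to the multinomial form, note that grouping the partition sum by type $(b_1,\dots,b_m)$ and multiplying by the count above recovers \eqref{eqn: Faa di Bruno's formula} with $k=b_1+\cdots+b_m$ as claimed.
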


\vspace{5mm}

For the convenience of notation, let us introduce $\xi:=(\xi_1,\xi_2,\xi_3,\cdots)$, where $\xi_i$ is defined by
\begin{equation}
\xi_i:=\dfrac{1}{u}\cdot\dfrac{\partial^{i} u}{\partial x^i}, \quad i\in\mathbb{N}.
\end{equation}
The integration by parts formulas play an important role in the algorithm.

\vspace{5mm}

\begin{lemma}[Integration by parts formulas for $n$-th time derivative]\thlabel{lem: general formula for ibp in algorithm}
Suppose that $u=u(x,t)$ is the solution to the Cauchy problem \eqref{eqn: Cauchy problem for heat eqn}. For $j=1,2,\dotsc, r$, let $\mathcal{P}_j=(p_{j,1},p_{j,2},\dotsc,p_{j,2\,n-1})$ be a solution of 
\begin{equation}\label{eqn: eqn for sum of ip=2m-1}
\sum_{i=1}^{2\,n-1}i \cdot p_{j,i}=2\,n-1, \quad p_{j,i}\in \mathbb{N}\cup\{0\},
\end{equation}
where $r$ is the number of solutions of \eqref{eqn: eqn for sum of ip=2m-1}. Let $\xi=(\xi_1,\xi_2,\dotsc,\xi_{2\,n-1})\in\mathbb{R}^{2\,n-1}$ and
\begin{equation}
T_j(\xi):=
\frac{1}{u^{\alpha}}
\frac{\partial}{\partial x}
\left(
u^{\alpha}
\prod_{i=1}^{2\,n-1}
\xi_i^{p_{j,i}}
\right),
\quad j=1,2,\dotsc, r.
\end{equation}
Then 
\begin{equation}
\int_{\mathbb{R}}u^{\alpha}\,T_j(\xi)\,dx=0, \quad j=1,2,\dotsc,r.
\end{equation}

\end{lemma}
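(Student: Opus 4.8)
The plan is to exploit the fact that the prefactor $1/u^{\alpha}$ in the definition of $T_j$ is engineered precisely to cancel the weight $u^{\alpha}$ in the integrand, turning $u^{\alpha}\,T_j(\xi)$ into an exact $x$-derivative. Interpreting $\xi_i=u^{(i)}/u$ (with $u^{(i)}:=\partial^{i}u/\partial x^{i}$) as functions of $x$, the definition of $T_j$ gives directly
\begin{equation*}
\int_{\mathbb{R}} u^{\alpha}\,T_j(\xi)\,dx
=\int_{\mathbb{R}}\frac{\partial}{\partial x}\left(u^{\alpha}\prod_{i=1}^{2\,n-1}\xi_i^{p_{j,i}}\right)dx
=\left[\,u^{\alpha}\prod_{i=1}^{2\,n-1}\xi_i^{p_{j,i}}\,\right]_{x=-\infty}^{x=+\infty},
\end{equation*}
so the entire lemma reduces to showing that this boundary term vanishes for every $t>0$. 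The smoothness of $u$ for $t>0$ (and positivity of $u$ coming from the convolution with the Gaussian kernel) justifies applying the fundamental theorem of calculus in this first step.

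Next I would rewrite the boundary integrand purely in terms of $u$ and its spatial derivatives. Using $\xi_i=u^{(i)}/u$, the integrand becomes
\begin{equation*}
u^{\alpha}\prod_{i=1}^{2\,n-1}\xi_i^{p_{j,i}}
=u^{\alpha-P}\prod_{i=1}^{2\,n-1}\bigl(u^{(i)}\bigr)^{p_{j,i}},
\qquad P:=\sum_{i=1}^{2\,n-1}p_{j,i},
\end{equation*}
and the task is to prove that this tends to $0$ as $|x|\to\infty$. Since $u(\pm\infty,t)=0$, the strategy is to show that each higher derivative $u^{(i)}$ decays like a fractional power of $u$; precisely, I would establish the higher-order analogue of \thref{thm: ux/u^k=0 as |x|tends to infty}, namely $\lim_{|x|\to\infty} u^{(i)}/u^{k}=0$ for every $0<k<1$ and every $i\ge1$. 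Granting this, I would fix a single $k$ close enough to $1$ that $(1-k)\,P<\alpha$; then near $\pm\infty$ one has $\prod_i |u^{(i)}|^{p_{j,i}}\le C\,u^{k\,P}$, so the integrand is dominated by $C\,u^{\alpha-(1-k)\,P}$ with strictly positive exponent, which vanishes as $u\to0$.

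The main obstacle is the decay estimate $u^{(i)}=o(u^{k})$ for the higher derivatives $i\ge 2$; for $i=1$ it is exactly \thref{thm: ux/u^k=0 as |x|tends to infty}. I would obtain it by extending Proposition 2 of \cite{Cheng-Higher-order-derivatives-Costa-IEEE-2015} from $u_x/u$ to $u^{(i)}/u$, for instance through the recursion $\partial_x\xi_i=\xi_{i+1}-\xi_1\,\xi_i$, which expresses $u^{(i+1)}/u$ in terms of lower-order ratios and lets one induct on $i$. At the heuristic level this is transparent: differentiating the convolution formula \eqref{eqn: convolution formula for Cauchy problem of heat equation} shows that $u^{(i)}$ inherits the Gaussian tail of $u$ multiplied by a factor growing at most polynomially (of degree $i$) in $x$, so that $u^{\alpha}\prod_i\xi_i^{p_{j,i}}$ behaves like a Gaussian times a polynomial of degree $\sum_i i\,p_{j,i}=2\,n-1$ and is therefore forced to vanish at infinity. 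This also explains why the weight constraint $\sum_{i} i\,p_{j,i}=2\,n-1$ in \eqref{eqn: eqn for sum of ip=2m-1} is exactly the right bookkeeping device: it fixes the polynomial degree produced by the product $\prod_i\xi_i^{p_{j,i}}$, which the Gaussian weight $u^{\alpha}$ always dominates.
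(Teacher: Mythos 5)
The paper states \thref{lem: general formula for ibp in algorithm} without any proof, so there is no in-paper argument to compare against; judged on its own terms, your proposal supplies the argument the paper implicitly relies on, and its architecture is correct: reduce $\int_{\mathbb{R}}u^{\alpha}T_j(\xi)\,dx$ by the fundamental theorem of calculus to the boundary term $u^{\alpha-P}\prod_{i}(u^{(i)})^{p_{j,i}}$ evaluated at $\pm\infty$, and kill that term by showing $u^{(i)}=o(u^{k})$ for every $0<k<1$ and then choosing $k$ with $(1-k)\,P<\alpha$. This is exactly the $n$-th order analogue of how the paper itself disposes of the boundary term in \thref{thm: First Time Derivative} via \thref{thm: ux/u^k=0 as |x|tends to infty}, and your fractional-power bookkeeping is the genuinely needed new point for the Tsallis weight: for $\alpha>1$ the boundary term is already handled by $u\prod_i\xi_i^{p_{j,i}}\to0$ times the bounded factor $u^{\alpha-1}$, whereas for $0<\alpha<1$ your choice of $k$ close to $1$ is what makes the exponent $\alpha-(1-k)P$ positive.

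Two caveats on the step you correctly single out as the crux. First, no genuine ``extension'' of Proposition 2 of \cite{Cheng-Higher-order-derivatives-Costa-IEEE-2015} is required: that proposition already covers products of the ratios $u^{(i)}/u$ of arbitrary orders (Cheng and Geng need precisely this to discard their own boundary terms for the third- and fourth-order derivatives), so taking $\hat{k}$ identical factors of order $i$ gives $u\,\left(u^{(i)}/u\right)^{\hat{k}}\to0$, and then the paper's own manipulation in \thref{thm: ux/u^k=0 as |x|tends to infty} (rewrite as $u^{(i)}/u^{(\hat{k}-1)/\hat{k}}\to0$ and take $\hat{k}=\left[\frac{1}{1-k}\right]+1$) yields $u^{(i)}/u^{k}\to0$ verbatim. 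Second, the two substitute justifications you sketch would not survive scrutiny: decay of $\xi_i$ gives no control of $\partial_x\xi_i$, so the proposed induction through $\partial_x\xi_i=\xi_{i+1}-\xi_1\,\xi_i$ does not close; and the heuristic that $u^{(i)}$ ``inherits the Gaussian tail of $u$'' is false for general $g\in C(\mathbb{R})\cap L^{\infty}(\mathbb{R})$ --- a heavy-tailed probability density $g$ produces a solution $u$ with no Gaussian decay at all, which is exactly why the nontrivial content is Cheng--Geng's proposition rather than a tail computation. Relatedly, the constraint $\sum_i i\,p_{j,i}=2\,n-1$ plays no role in the vanishing of the boundary term (which holds for any multi-index with $P\ge1$); it only enumerates the $T_j$ of the correct homogeneity for step (5) of the algorithm, as you half-acknowledge. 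Minor points worth a word in a polished write-up: positivity of $u$ for $t>0$ and $u(\pm\infty,t)=0$ (both immediate from the convolution formula with $g$ a probability density, the latter by dominated convergence), and the first display should be read as an improper integral over $[-R,R]$ with $R\to\infty$.
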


\vspace{5mm}

We are now in the position to introduce our algorithm for calculating $n$-th time derivative of $H_{\alpha}[u(t)]$ as follows:

\begin{enumerate}[(1)]
  \item (Scaling) By letting $t=2\hat{t}$ or $\frac{t}{2}=\hat{t}$, the heat equation $u_t=\frac{1}{2}u_{xx}$ in \eqref{eqn: Cauchy problem for heat eqn} is rescaled to $u_{\hat{t}}=u_{xx}$. It is readily seen that
 
\begin{equation}\label{}
\dfrac{d^n}{dt^n}H_{\alpha}[u(t)]
=
\frac{1}{2^n}\dfrac{d^n}{d\hat{t}^n}H_{\alpha}[u(2\hat{t})].
\end{equation}

  \item (Find all the possible $\displaystyle\prod_{i=1}^{2\,n}\xi_i^{p_{j,i}}$) For $j=1,2,\dotsc,\ell$, let $\mathcal{P}_j=(p_{j,1},p_{j,2},\dotsc,p_{j,2\,n})$ be a solution of 
\begin{equation}
\sum_{i=1}^{2\,n}i \cdot p_{j,i}=2\,n, \quad p_{j,i}\in \mathbb{N}\cup\{0\},
\end{equation}
where $\ell$ is the number of solutions. We find in the problem of finding $n$-th time derivative \textit{all the possible} $\mathcal{Q}_j(\xi)$ given by
\begin{equation}
\mathcal{Q}_j(\xi):=
\prod_{i=1}^{2\,n}
\xi_i^{p_{j,i}}
\quad j=1,2,\dotsc,\ell,
\end{equation}
where $\xi=(\xi_1,\xi_2,\dotsc,\xi_{2\,n})\in\mathbb{R}^{2\,n}$.

  \item (Find $S_0(\xi)$) Use Fa\`a di Bruno's formula in \thref{thm: Faa di Bruno's formula use to find general formula for S0} with $g(r)=r^{\alpha}$ and $f=u(.,t)$, and the equation $u_{\hat{t}}=u_{xx}$ to find $S_0(\xi)$ determined by
\begin{equation}\label{eqn: S0 in step in algorithm}
(\alpha-1)\dfrac{d^n}{d\hat{t}^n}H_{\alpha}[u(2\hat{t})]
=
-(\alpha-1)\int_\mathbb{R}\dfrac{\partial^n}{\partial \hat{t}^n}u^{\alpha}\,dx
=
-\alpha\,(\alpha-1)\int_{\mathbb{R}} u^{\alpha}S_0(\xi)\,dx,
\end{equation}    
where 
\begin{equation}
S_0(\xi)=\frac{1}{\alpha\,u^{\alpha}}\dfrac{\partial^n}{\partial \hat{t}^n}u^{\alpha}.
\end{equation}

  \item (Integration by parts formulas) Employ \thref{lem: general formula for ibp in algorithm} to find \textit{all possible} integration by parts formulas
\begin{equation}\label{eqn: IBP formulae in step in algorithm}
\int_{\mathbb{R}}u^{\alpha}\,T_j(\xi)\,dx=0, \quad j=1,2,\cdots,r,
\end{equation}
where $k$ is the number of solutions to the equation 
\begin{equation}\label{eqn: eqn for mi index eqn}
\sum_{i=1}^{2\,n-1}i \cdot p_{i}=2\,n-1, \quad p_i\in \mathbb{N}\cup\{0\}. 
\end{equation}
  \item (Alternative representation of time derivatives) Use \eqref{eqn: IBP formulae in step in algorithm} to rewrite \eqref{eqn: S0 in step in algorithm} as
  
\begin{equation}\label{eqn: Halpha=So=S0+T in algorithm}
(\alpha-1)\dfrac{d^n}{d\hat{t}^n}H_{\alpha}[u(2\hat{t})]
=
-\alpha\,(\alpha-1)\int_{\mathbb{R}} u^{\alpha}S_0(\xi)\,dx
=-\alpha\,(\alpha-1)\int_{\mathbb{R}} u^{\alpha}\left((S_0+\sum_{i=1}^{r} c_i\,T_i)(\xi)\right)\,dx,
\end{equation}       
\textit{for any} $c_i\in\mathbb{R}$ ($i=1,\dotsc,r$). 
\item (Positiveness of $S_{\alpha}(\xi)$) We use step (2) to form the problem of finding $c_i\in\mathbb{R}$ ($i=1,\cdots,r$) such that 
\begin{equation}
S_{\alpha}(\xi)
:=(-1)^{n}\left(S_0+\sum_{i=1}^{r} c_i\,T_i\right)(\xi)
=
\sum_{j=1}^{\ell} k_j\,\mathcal{Q}_j(\xi)
\ge0, \quad \forall \xi\in\mathbb{R}^{2\,n}.
\end{equation}
With $S_{\alpha}(\xi)$ given in the above equation, \eqref{eqn: Halpha=So=S0+T in algorithm} becomes
\begin{equation}\label{eqn: Halpha and Salpha with factor alpha-1}
(\alpha-1)\,\dfrac{d^n}{d\hat{t}^n}H_{\alpha}[u(2\hat{t})]
=(-1)^{n+1}\alpha\,(\alpha-1)\int_{\mathbb{R}} u^{\alpha}
\,S_{\alpha}(\xi)
\,dx.
\end{equation}
$S_{\alpha}(\xi)$ is of degree $2\,n$ since each time derivative means using $u_{\hat{t}}=u_{xx}$, and this results in differentiation with respect to $x$ twice.
\item (Zero coefficients in $S_{\alpha}(\xi)$) The condition of the positiveness of $S_{\alpha}(\xi)$ \textit{for all} $\xi\in\mathbb{R}^{2\,n}$ leads to some vanishing coefficients in $S_{\alpha}(\xi)$, and thus certain $c_i's$ are determined.
\item (Determine $\alpha$ such that $S_{\alpha}(\xi)\ge0$, $\forall \xi\in\mathbb{R}^{2\,n}$) For different $n's$, we use different approaches. 



\begin{enumerate}[(a)]
  \item $n=2$ $\implies$ use \thref{lem: discriminant for quadratic} to determine $\alpha$ and the the remaining $c_i's$. 
  \item $n=3$ $\implies$ use \thref{lem: discriminant for the 6th degree poly corresp. 3rd time derivative} to determine $\alpha$ and the the remaining $c_i's$.
  \item $n=4,5$ $\implies$ \thref{thm: positive semi-definite <=>eigenvalues>=0} as follows is employed to determine $\alpha$ and the the remaining $c_i's$.

\begin{theorem}[Theorem 9.1. in \cite{xia2016automated}]\thlabel{thm: positive semi-definite <=>eigenvalues>=0}
\ \

A real symmetric matrix $M$ is positive semi-definite if and only if one of the following conditions holds.
\begin{enumerate}[(1)]
  \item All the roots of the characteristic polynomial of M are non-negative.
  \item There exists a real matrix $V$ such that $M=V\,V^T$.
  \item All the principal minors of $M$ are non-negative.
\end{enumerate}

\end{theorem}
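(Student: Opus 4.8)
The plan is to route every equivalence through the \textbf{spectral theorem} for real symmetric matrices, which supplies an orthogonal matrix $Q$ and a diagonal matrix $\Lambda=\operatorname{diag}(\lambda_1,\dots,\lambda_m)$ of real eigenvalues with $M=Q\,\Lambda\,Q^{T}$. First I would settle the equivalence with (1). Writing $y=Q^{T}x$ gives $x^{T}M\,x=\sum_{i=1}^{m}\lambda_i\,y_i^{2}$, and since $x\mapsto y$ is a bijection, this quadratic form is non-negative for every $x$ exactly when every $\lambda_i\ge0$. Hence $M$ is positive semi-definite if and only if all roots of its characteristic polynomial are non-negative.

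Next I would dispatch (2). If all $\lambda_i\ge0$, set $\Lambda^{1/2}=\operatorname{diag}(\sqrt{\lambda_1},\dots,\sqrt{\lambda_m})$ and $V=Q\,\Lambda^{1/2}$; then $V\,V^{T}=Q\,\Lambda\,Q^{T}=M$, giving the factorization. Conversely, if $M=V\,V^{T}$ then $x^{T}M\,x=\|V^{T}x\|^{2}\ge0$, so $M$ is positive semi-definite. Combined with the previous step, conditions (1) and (2) are each equivalent to positive semi-definiteness.

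The remaining and most delicate equivalence is (3). The direction ``positive semi-definite $\Rightarrow$ all principal minors non-negative'' is the easy half: restricting the form $x^{T}M\,x$ to the coordinate subspace indexed by a subset $S$ shows every principal submatrix $M_S$ is itself positive semi-definite, and by the equivalence with (1) its determinant is the product of its non-negative eigenvalues, hence $\ge0$. The converse is where the work lies. Here I would invoke the identity expressing the characteristic polynomial as
\[
\det(\lambda I-M)=\sum_{k=0}^{m}(-1)^{k}E_k\,\lambda^{m-k},
\]
where $E_0=1$ and $E_k$ is the sum of all $k\times k$ principal minors of $M$, equivalently the $k$-th elementary symmetric function of the eigenvalues. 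If every principal minor is non-negative then every $E_k\ge0$. Substituting $\lambda=-\mu$ with $\mu>0$ yields $\det(-\mu I-M)=(-1)^{m}\sum_{k=0}^{m}E_k\,\mu^{m-k}$, whose right-hand sum is strictly positive (the $k=0$ term alone contributes $\mu^{m}>0$). Thus the characteristic polynomial has no negative root; since $M$ is symmetric all its roots are real, so every eigenvalue is non-negative, and by the equivalence with (1) the matrix is positive semi-definite.

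The main obstacle is precisely the converse half of (3): translating the sign information carried by the principal minors into a statement about the spectrum. The key device is the minor-versus-coefficient identity together with the sign-tracking substitution $\lambda=-\mu$, which converts an alternating polynomial into one with uniformly non-negative coefficients and thereby excludes negative roots; the realness of the spectrum, a consequence of symmetry, is essential, since the argument only controls the behaviour on the real axis.
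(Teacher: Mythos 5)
Your proof is correct, but there is nothing in the paper to compare it against: the paper does not prove this statement at all, importing it verbatim as Theorem 9.1 of the cited book of Xia and Yang and using it only as a black box in step (8c) of the entropy-construction algorithm. Taken on its own terms, your argument is the standard complete derivation and all steps check out. The spectral theorem disposes of the equivalences with (1) and (2) exactly as you say, and the one genuinely delicate point, the converse half of (3), is handled correctly: the coefficient identity $\det(\lambda I-M)=\sum_{k=0}^{m}(-1)^{k}E_{k}\,\lambda^{m-k}$, with $E_{k}$ the sum of \emph{all} $k\times k$ principal minors, together with the substitution $\lambda=-\mu$, gives $\det(-\mu I-M)=(-1)^{m}\sum_{k=0}^{m}E_{k}\,\mu^{m-k}$, and since $E_{0}=1$ the sum is strictly positive for $\mu>0$, excluding negative real roots; symmetry then guarantees the whole spectrum is real, which, as you note, is essential since the argument controls only the real axis. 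It is worth emphasizing that your use of all principal minors rather than only the leading ones is not a stylistic choice but necessary: the matrix $\mathrm{diag}(0,-1)$ has non-negative leading principal minors yet fails to be positive semi-definite, so the hypothesis of condition (3) cannot be weakened, and your proof correctly exploits the full hypothesis through the $E_{k}$. The two standard facts you invoke without proof (the spectral theorem and the elementary-symmetric expansion of the characteristic polynomial) are appropriate to cite at this level of the argument.
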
    
  
\end{enumerate}

\end{enumerate}

\vspace{5mm}
\section{Second-order time derivative}\label{sec: Second-order time derivative}
\vspace{5mm}


Following the algorithmic construction of entropies in Section~\ref{sec: Algorithmic construction of entropies}, we use \eqref{eqn: S0 in step in algorithm} with $n=2$ to find  
\begin{equation}\label{eqn: S0 in step 2nd time derivative}
(\alpha-1)\dfrac{d^2}{d\hat{t}^2}H_{\alpha}[u(2\hat{t})]
=
-(\alpha-1)\int_\mathbb{R}\dfrac{\partial^2}{\partial \hat{t}^2}u^{\alpha}\,dx
=
-\alpha\,(\alpha-1)\int_{\mathbb{R}} u^{\alpha}S_0(\xi)\,dx.
\end{equation}
Then we employ \thref{lem: general formula for ibp in algorithm} with $n=2$ to find \textit{all possible} integration by parts formulas in 

\vspace{5mm}

\begin{lemma}\thlabel{lem: IBP formula for 2nd time derivative}
\begin{equation}\label{eqn: IBP formulae 2nd derivative}
\int_{\mathbb{R}}u^{\alpha}\,T_j(\xi)\,dx=0, \quad j=1,2,3,
\end{equation}
where
\begin{subequations}\label{eqn: def T1-T3 2nd time derivative}
\begin{eqnarray}
\label{eqn: def of T1 2nd time derivative}
T_1(\xi) & = &  (\alpha-1)\,\xi_1\,\xi_3+\xi_4, \\[2mm]
\label{eqn: def of T2 2nd time derivative}      
T_2(\xi) & = &  (\alpha-2)\,\xi _1^2\,\xi _2+\xi_1\,\xi_3+\xi _2^2, \\[2mm]
\label{eqn: def of T3 2nd time derivative}
T_3(\xi) & = & (\alpha -3)\,\xi _1^4 +3\,\xi _1^2\,\xi_2.
\end{eqnarray}
\end{subequations}
\end{lemma}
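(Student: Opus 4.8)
The plan is to obtain \thref{lem: IBP formula for 2nd time derivative} as the explicit specialization of the general formula \thref{lem: general formula for ibp in algorithm} to $n=2$. First I would enumerate the monomials $\prod_{i=1}^{3}\xi_i^{p_i}$ that are admissible, i.e.\ the nonnegative integer solutions of the index equation $\sum_{i=1}^{3} i\,p_i = 2n-1 = 3$. There are exactly three, namely $(p_1,p_2,p_3)=(0,0,1),(1,1,0),(3,0,0)$, giving the monomials $\xi_3$, $\xi_1\xi_2$, $\xi_1^3$; hence $r=3$, and these three choices will produce $T_1$, $T_2$, $T_3$ respectively.

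The computational core is to evaluate each
\[
T_j(\xi) = \frac{1}{u^{\alpha}}\frac{\partial}{\partial x}\Bigl(u^{\alpha}\prod_{i=1}^{3}\xi_i^{p_{j,i}}\Bigr)
= \alpha\,\xi_1\prod_{i=1}^{3}\xi_i^{p_{j,i}} + \frac{\partial}{\partial x}\Bigl(\prod_{i=1}^{3}\xi_i^{p_{j,i}}\Bigr),
\]
where the second equality uses the product rule together with $u_x/u=\xi_1$. To turn the remaining $x$-derivative into a polynomial in the $\xi_i$, the key tool is the recursion
\[
\frac{\partial \xi_i}{\partial x} = \xi_{i+1} - \xi_1\,\xi_i,
\]
which is immediate from $\xi_i = (\partial^i u/\partial x^i)/u$ by the quotient rule. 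Substituting this into each of the three monomials is a short calculation; for instance, $\partial_x(\xi_1^3)=3\xi_1^2(\xi_2-\xi_1^2)$, so that $T_3 = \alpha\,\xi_1^4 + 3\xi_1^2\xi_2 - 3\xi_1^4 = (\alpha-3)\xi_1^4 + 3\xi_1^2\xi_2$, and the analogous bookkeeping for $\xi_3$ and $\xi_1\xi_2$ reproduces $T_1=(\alpha-1)\xi_1\xi_3+\xi_4$ and $T_2=(\alpha-2)\xi_1^2\xi_2+\xi_1\xi_3+\xi_2^2$.

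The vanishing $\int_{\mathbb{R}}u^{\alpha}T_j\,dx=0$ is then inherited directly from \thref{lem: general formula for ibp in algorithm}: each integrand $u^{\alpha}T_j$ is by construction the exact derivative $\partial_x\bigl(u^{\alpha}\prod_i\xi_i^{p_{j,i}}\bigr)$, so the integral equals the boundary term $\bigl[u^{\alpha}\prod_i\xi_i^{p_{j,i}}\bigr]_{-\infty}^{\infty}$, which is zero by the decay of $u$ and its spatial derivatives at infinity (in the spirit of \thref{thm: ux/u^k=0 as |x|tends to infty}). I do not anticipate a genuine obstacle, since everything reduces to an already-proven general lemma; the only place demanding care is the correct application of the recursion $\partial_x\xi_i=\xi_{i+1}-\xi_1\xi_i$ to the products $\xi_1\xi_2$ and $\xi_1^3$, where the cross terms from the product rule are precisely what shift the coefficients of $\xi_1^2\xi_2$ and $\xi_1^4$ down to $\alpha-2$ and $\alpha-3$.
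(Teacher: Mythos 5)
Your proposal is correct and follows exactly the route the paper takes: it specializes \thref{lem: general formula for ibp in algorithm} to $n=2$, enumerates the three solutions $(p_1,p_2,p_3)=(0,0,1),(1,1,0),(3,0,0)$ of $\sum_{i=1}^{3} i\,p_i=3$ (which the paper lists immediately after the lemma), and obtains the vanishing integrals from the exact-derivative structure. Your explicit verification of $T_1,T_2,T_3$ via the recursion $\partial_x\xi_i=\xi_{i+1}-\xi_1\xi_i$ is accurate and merely makes the paper's implicit computation visible.
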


\vspace{5mm}

The integration by parts formulas \eqref{eqn: IBP formulae 2nd derivative} in \thref{lem: IBP formula for 2nd time derivative} are determined by means of \eqref{eqn: eqn for mi index eqn} with $n=2$:
\begin{equation}
\sum_{i=1}^{3}i \cdot p_{i}=3, \quad p_i\in \mathbb{N}\cup\{0\},
\end{equation}
which has the solutions
\begin{subequations}
\begin{eqnarray}
(p_1,p_2,p_3) & = & (0,0,1), \\ 
(p_1,p_2,p_3) & = & (1,1,0), \\    
(p_1,p_2,p_3) & = & (3,0,0).     
\end{eqnarray}
\end{subequations}
Now we use \thref{lem: IBP formula for 2nd time derivative} to rewrite \eqref{eqn: S0 in step 2nd time derivative} 
\begin{equation}\label{eqn: Halpha=So=S0+T 2nd time derivative}
(\alpha-1)\dfrac{d^2}{d\hat{t}^2}H_{\alpha}[u(2\hat{t})]
=
-\alpha\,(\alpha-1)\int_{\mathbb{R}} u^{\alpha}S_0(\xi)\,dx
=-\alpha\,(\alpha-1)\int_{\mathbb{R}} u^{\alpha}\left((S_0+\sum_{i=1}^{3} c_i\,T_i)(\xi)\right)\,dx
\end{equation}
\textit{for any} $c_i\in\mathbb{R}$ ($i=1,2,3$). Our goal is to find $c_i\in\mathbb{R}$ ($i=1,2,3$) such that 
\begin{equation}
S_{\alpha}(\xi)
=\left(S_0+\sum_{i=1}^{3} c_i\,T_i\right)(\xi)\ge0, \quad \forall \xi=(\xi_1,\xi_2,\xi_3,\xi_4)\in\mathbb{R}^{4}.
\end{equation}
With $S_{\alpha}(\xi)$ given in the above equation, \eqref{eqn: Halpha=So=S0+T 2nd time derivative} becomes
\begin{equation}\label{eqn: Halpha and Salpha with factor alpha-1 2nd time derivative}
(\alpha-1)\,\dfrac{d^2}{d\hat{t}^2}H_{\alpha}[u(2\hat{t})]
=-
\alpha\,(\alpha-1)\,\int_{\mathbb{R}} u^{\alpha}
\,S_{\alpha}(\xi)
\,dx.
\end{equation}
It turns out that 
\begin{equation}\label{eqn: defi of S alpha 2nd time derivative}
S_{\alpha}(\xi)
:=\left(S_0+\sum_{i=1}^{3} c_i\,T_i\right)(\xi)
=k_1\,\xi_1^4+k_2\,\xi_1^2\,\xi_2+k_3\,\xi_1\,\xi_3+k_4\,\xi_2^2+k_5\,\xi_4,
\end{equation}
where 
\begin{subequations}\label{eqn: def k1-k5 2nd time derivative}
\begin{eqnarray}
\label{eqn: def of k1 2nd time derivative}
k_1 & = & (\alpha -3)\,c_3, \\[2mm]
\label{eqn: def of k2 2nd time derivative}      
k_2 & = & (\alpha -2)\,c_2+3\,c_3, \\[2mm]
\label{eqn: def of k3 2nd time derivative}
k_3 & = & (\alpha -1)\,c_1+c_2-1,\\[2mm]
\label{eqn: def of k4 2nd time derivative}
k_4 & = & c_2+1, \\[2mm]
\label{eqn: def of k5 2nd time derivative}      
k_5 & = & c_1.
\end{eqnarray}
\end{subequations}
We need the following lemma, which is essentially identical to Lemma 11. in \cite{Jungel-algorithmic-construction-of-entropies-Nonlinerity-2006}. 

\vspace{5mm}

\begin{lemma}\thlabel{lem: discriminant for quadratic}
Let $S_{\alpha}(\xi)$ be given by \eqref{eqn: defi of S alpha 2nd time derivative}. Then $S_{\alpha}(\xi)\ge 0$ for $\xi=(\xi_1,\xi_2,\xi_3,\xi_4)\in\mathbb{R}^4$ if and only if
\begin{enumerate}[(1)]
\item  $k_3=k_5=0$, 
\end{enumerate}
and one of the following statements holds:
\begin{enumerate}[(2a)]
\item $k_4>0$ and $4\,k_1\,k_4-k_2^2\ge0$;
\item $k_2=k_4=0$ and $k_1\ge0$.
\end{enumerate}
\end{lemma}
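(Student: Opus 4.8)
The plan is to exploit the fact that $\xi_3$ and $\xi_4$ enter $S_{\alpha}$ only linearly and vary freely and independently of $\xi_1,\xi_2$, which forces the two corresponding coefficients to vanish, and then to reduce the remaining expression to a two-variable quadratic form whose sign is governed by an elementary discriminant computation.

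First I would establish the necessity of condition (1). In \eqref{eqn: defi of S alpha 2nd time derivative} the variable $\xi_4$ occurs only through the term $k_5\,\xi_4$; fixing $\xi_1=\xi_2=\xi_3=0$ leaves $S_{\alpha}=k_5\,\xi_4$, which can be made negative for suitable $\xi_4$ unless $k_5=0$. Once $k_5=0$, setting $\xi_1=1$ and $\xi_2=0$ leaves $S_{\alpha}=k_1+k_3\,\xi_3$, linear in the free variable $\xi_3$, so nonnegativity forces $k_3=0$. Thus $k_3=k_5=0$ is necessary, and conversely these two terms simply disappear when (1) holds, so from here on one may assume (1).

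Under (1), $S_{\alpha}$ collapses to $k_1\,\xi_1^4+k_2\,\xi_1^2\,\xi_2+k_4\,\xi_2^2$. Since only even powers of $\xi_1$ appear, I would set $a:=\xi_1^2$ and $b:=\xi_2$, observing that as $(\xi_1,\xi_2)$ ranges over $\mathbb{R}^2$ the pair $(a,b)$ ranges over $[0,\infty)\times\mathbb{R}$ with $a,b$ independent. Hence $S_{\alpha}\ge0$ on $\mathbb{R}^4$ is equivalent to $Q(a,b):=k_1\,a^2+k_2\,a\,b+k_4\,b^2\ge0$ for all $a\ge0$, $b\in\mathbb{R}$. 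Taking $a=0$ yields $k_4\ge0$, after which I would split into the cases $k_4>0$ and $k_4=0$. When $k_4>0$, minimizing the upward parabola $Q(a,\cdot)$ over $b$ gives the value $\frac{a^2}{4\,k_4}\,(4\,k_1\,k_4-k_2^2)$, nonnegative for every $a$ exactly when $4\,k_1\,k_4-k_2^2\ge0$, which is (2a) and also forces $k_1\ge0$. When $k_4=0$, for $a>0$ the quantity $Q=a\,(k_1\,a+k_2\,b)$ is linear and unbounded below in $b$ unless $k_2=0$, after which $Q=k_1\,a^2$ requires $k_1\ge0$, which is (2b).

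The computations are entirely elementary, so I do not anticipate a serious obstacle; the only point deserving care is noting that restricting $a=\xi_1^2$ to $[0,\infty)$ rather than all of $\mathbb{R}$ does not enlarge the feasible set, since the sign conditions extracted at $b=0$ and from the minimization coincide with the positive semidefiniteness of $\left(\begin{smallmatrix} k_1 & k_2/2 \\ k_2/2 & k_4\end{smallmatrix}\right)$. In effect the case split (2a)/(2b) is precisely the standard two-variable nonnegativity criterion, organized according to whether $k_4>0$ or $k_4=0$.
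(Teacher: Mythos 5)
Your proof is correct, and it is worth noting how it relates to the paper's. For the necessity of condition (1) you argue exactly as the paper does: evaluating at $\xi_1=\xi_2=\xi_3=0$ kills $k_5$, and then, with $k_5=0$, the affine dependence $k_1+k_3\,\xi_3$ at $\xi_1=1$, $\xi_2=0$ kills $k_3$ (your phrasing is in fact cleaner than the paper's, which writes $S_{\alpha}(1,0,1,0)=k_1+k_3\,\xi_3$ while still treating $\xi_3$ as a free variable). The genuine difference is in part (2): the paper does not prove the criterion at all, but simply imports it from Lemma 11 of \cite{Jungel-algorithmic-construction-of-entropies-Nonlinerity-2006}, whereas you derive it from scratch via the substitution $a=\xi_1^2$, $b=\xi_2$, reducing to the homogeneous quadratic $Q(a,b)=k_1\,a^2+k_2\,a\,b+k_4\,b^2$ on the half-plane $a\ge0$ and splitting on $k_4>0$ versus $k_4=0$. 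Your case analysis is airtight: the $b$-minimization for $k_4>0$ yields exactly $\tfrac{a^2}{4k_4}\,(4\,k_1\,k_4-k_2^2)$, and the $k_4=0$ case correctly extracts $k_2=0$, $k_1\ge0$ from unboundedness in $b$; moreover, since you derive both directions directly on $a\ge0$, $b\in\mathbb{R}$, the concern about the half-plane restriction never actually needs the evenness observation $Q(-a,b)=Q(a,-b)$, so your closing remark is reassurance rather than a load-bearing step. What the paper's route buys is brevity and consistency with its systematic reliance on the J\"ungel--Matthes machinery; what yours buys is a self-contained lemma, with the equivalence to positive semidefiniteness of the associated $2\times2$ coefficient matrix made explicit rather than left inside a citation.
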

\begin{proof}
(2a) and (2b) follows from \cite{Jungel-algorithmic-construction-of-entropies-Nonlinerity-2006}. From (2a) and (2b), it follows that $k_1\ge0$. We prove (1) as follows:
\begin{itemize}
  \item Since $S_{\alpha}(0,0,0,\xi_4)=k_5\,\xi _4$, we obtain $k_5=0$.
  \item When $k_5=0$, $S_{\alpha}(1,0,1,0)=k_1+k_3\,\xi_3$, which yields $k_3=0$.
\end{itemize}
This completes the proof.

\end{proof}
It follows immediately from \eqref{eqn: def of k3 2nd time derivative} and \eqref{eqn: def of k5 2nd time derivative} that $k_3=k_5=0$ in \thref{lem: discriminant for quadratic} leads to
\begin{subequations}\label{eqn: determine c1and c2}
\begin{eqnarray}
\label{eqn: c1 2nd time derivative}
c_1 & = & 0, \\[2mm]
\label{eqn: c2 2nd time derivative}      
c_2 & = & 1.
\end{eqnarray}
\end{subequations}
Under \eqref{eqn: determine c1and c2}, \eqref{eqn: defi of S alpha 2nd time derivative} becomes
\begin{equation}\label{eqn: defi of S alpha 2nd time derivative under c1=0 and c2=1}
S_{\alpha}(\xi)
=(\alpha -3)\,c_3\,\xi _1^4+\left(\alpha +3\,c_3-2\right)\,\xi _1^2\,\xi _2
   +2\,\xi _2^2.
\end{equation}
Using  (2a) and (2b) in \thref{lem: discriminant for quadratic}, we are led to 
\begin{equation}
\alpha\in(0,1)\cup(1,3].
\end{equation}

\begin{theorem}[Second-order time derivative]\thlabel{def: Second Time Derivative}
Let $u=u(x,t)$ be the solution of \eqref{eqn: Cauchy problem for heat eqn}. Suppose that $\alpha\in(0,1)\cup(1,3]$. Then for $t>0$,
$$
\dfrac{d^2}{dt^2}H_{\alpha}[u(t)]\le0.
$$

\end{theorem}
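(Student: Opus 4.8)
The plan is to read the conclusion off directly from the algorithmic identity already assembled, once the sign of the quartic form $S_\alpha$ has been pinned down. The starting point is the representation \eqref{eqn: Halpha and Salpha with factor alpha-1 2nd time derivative},
\[
(\alpha-1)\,\dfrac{d^2}{d\hat t^2}H_\alpha[u(2\hat t)]=-\alpha\,(\alpha-1)\int_{\mathbb R} u^\alpha\,S_\alpha(\xi)\,dx,
\]
which holds for \emph{every} choice of $c_1,c_2,c_3$ because each $T_j$ integrates to zero against $u^\alpha$ by \thref{lem: IBP formula for 2nd time derivative}. The crucial observation is that the factor $(\alpha-1)$ sits on \emph{both} sides, so it can be cancelled outright for $\alpha\neq1$, giving $\frac{d^2}{d\hat t^2}H_\alpha[u(2\hat t)]=-\alpha\int_{\mathbb R} u^\alpha S_\alpha(\xi)\,dx$ with no sign ambiguity across the two regimes $\alpha\in(0,1)$ and $\alpha\in(1,3]$.

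Next I would fix the coefficients. Imposing the vanishing conditions $k_3=k_5=0$ forces $c_1=0$ and $c_2=1$ through \eqref{eqn: c1 2nd time derivative}--\eqref{eqn: c2 2nd time derivative}, reducing $S_\alpha$ to the two-variable form \eqref{eqn: defi of S alpha 2nd time derivative under c1=0 and c2=1}, a quadratic in $\xi_1^2$ and $\xi_2$. Applying the discriminant criterion of \thref{lem: discriminant for quadratic} (conditions (2a)/(2b)) to this form is exactly what selects the admissible range $\alpha\in(0,1)\cup(1,3]$ and, with $c_3$ chosen accordingly, yields $S_\alpha(\xi)\ge0$ for all $\xi\in\mathbb R^4$. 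This positivity is the one substantive algebraic input; everything surrounding it is bookkeeping.

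With $S_\alpha\ge0$ in hand the conclusion is immediate. Since $g$ is a probability density, the heat-kernel representation forces $u(x,t)>0$ for $t>0$, so $u^\alpha>0$ and the integrand $u^\alpha S_\alpha(\xi)$ is nonnegative; as $\alpha>0$ the right-hand side $-\alpha\int_{\mathbb R} u^\alpha S_\alpha(\xi)\,dx$ is $\le0$, whence $\frac{d^2}{d\hat t^2}H_\alpha[u(2\hat t)]\le0$. Finally I would undo the scaling $t=2\hat t$ via the identity $\frac{d^2}{dt^2}H_\alpha[u(t)]=\frac14\,\frac{d^2}{d\hat t^2}H_\alpha[u(2\hat t)]$ from step (1) of the algorithm; since $\frac14>0$ the sign is preserved, giving $\frac{d^2}{dt^2}H_\alpha[u(t)]\le0$ for $t>0$.

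I expect the only genuine obstacle to lie \emph{behind} this assembly rather than in it, namely the two analytic facts the algorithm takes for granted: that differentiation in $\hat t$ may be passed under the integral sign in \eqref{eqn: S0 in step 2nd time derivative}, and that all boundary terms generated by the integrations by parts in \thref{lem: IBP formula for 2nd time derivative} vanish at $x=\pm\infty$. The latter is the quartic analogue of the decay estimate proved in \thref{thm: ux/u^k=0 as |x|tends to infty}, and verifying that the boundary contributions from the specific monomials $\xi_1\xi_3$, $\xi_2^2$, and $\xi_4$ are each controlled by powers of the form $u_x/u^{k}$ with $0<k<1$ is where the care is needed.
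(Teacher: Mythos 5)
Your proposal is correct and follows essentially the same route as the paper: it reproduces the algorithmic derivation of Section~4 verbatim --- the identity \eqref{eqn: Halpha and Salpha with factor alpha-1 2nd time derivative}, the determination $c_1=0$, $c_2=1$ from $k_3=k_5=0$ via \thref{lem: discriminant for quadratic}, the discriminant conditions (2a)/(2b) selecting $\alpha\in(0,1)\cup(1,3]$, the cancellation of the factor $(\alpha-1)$, and the scaling $t=2\hat t$. Your closing caveats (differentiation under the integral and vanishing of the boundary terms, in the spirit of \thref{thm: ux/u^k=0 as |x|tends to infty}) are legitimate points the paper leaves implicit, but they do not constitute a deviation from its argument.
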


\vspace{5mm}

In particular, we find in \eqref{eqn: Halpha and Salpha with factor alpha-1 2nd time derivative}:
\begin{enumerate}[(a)]
  \item Letting $\alpha\to1$ and $c_3=-1$ in \eqref{eqn: defi of S alpha 2nd time derivative under c1=0 and c2=1} gives
\begin{equation}\label{}
\dfrac{d^2}{dt^2}H_{\alpha}[u(t)]
=
\frac{1}{4}\dfrac{d^2}{d\hat{t}^2}H_{\alpha}[u(2\hat{t})]
=-\frac{1}{2}\int_{\mathbb{R}} u
\,
\left(
\xi_1^2
-\,\xi _2
\right)^2
\,dx.
\end{equation}  

\item Letting $\alpha\to1$ and $c_3=-\frac{1}{9}$ in \eqref{eqn: defi of S alpha 2nd time derivative under c1=0 and c2=1} gives
\begin{equation}
\dfrac{d^2}{dt^2}H_{\alpha}[u(t)]
=
\frac{1}{4}\dfrac{d^2}{d\hat{t}^2}H_{\alpha}[u(2\hat{t})]
=-\frac{1}{18}\,\int_{\mathbb{R}} u
\,
\left(
\xi_1^2
-3\,\xi _2
\right)^2
\,dx.
\end{equation}

\item Letting $\alpha\to1$ and $c_3=-\frac{5}{9}$ in \eqref{eqn: defi of S alpha 2nd time derivative under c1=0 and c2=1} gives
\begin{equation}
\dfrac{d^2}{dt^2}H_{\alpha}[u(t)]
=
\frac{1}{4}\dfrac{d^2}{d\hat{t}^2}H_{\alpha}[u(2\hat{t})]
=-\frac{1}{4}\int_{\mathbb{R}} u
\,
\left(
\frac{10}{9}\,\left(\xi_1^2-\frac{6\,\xi_2}{5}\right)^2+\frac{2\,\xi_2^2}{5}
\right)
\,dx.
\end{equation}

\end{enumerate}

This shows that \thref{def: Second Time Derivative} generalizes the result in \cite{Villani-short-proof-concavity-entropy-power-IEEE-2000} in one dimension of space. The representation in (a) already exists in the literature. To the best of the author's knowledge, the representations in (b) and (c) seem not to exist in the literature.

\vspace{5mm}
\section{Third-order time derivative}
\vspace{5mm}

We first use \eqref{eqn: S0 in step in algorithm} with $n=3$ to find  
\begin{equation}\label{eqn: S0 in step 3rd time derivative}
(\alpha-1)\dfrac{d^3}{d\hat{t}^3}H_{\alpha}[u(2\hat{t})]
=
-(\alpha-1)\int_\mathbb{R}\dfrac{\partial^3}{\partial \hat{t}^3}u^{\alpha}\,dx
=
-\alpha\,(\alpha-1)\int_{\mathbb{R}} u^{\alpha}S_0(\xi)\,dx.
\end{equation}
Then we employ \thref{lem: general formula for ibp in algorithm} with $n=3$ to find \textit{all possible} integration by parts formulas in 

\vspace{5mm}

\begin{lemma}\thlabel{lem: IBP formula for 4th time derivative}
\begin{equation}
\int_{\mathbb{R}}u^{\alpha}\,T_j(\xi)\,dx=0, \quad j=1,2,\dotsc,7,
\end{equation}
where
\begin{subequations}\label{eqn: def T1-T7 3rd time derivative}
\begin{eqnarray}
\label{eqn: def of T1 3rd time derivative}
T_1(\xi) & = &  (\alpha  -1)\,\xi _1\,\xi _5+\xi _6, \\[2mm]
\label{eqn: def of T2 3rd time derivative}      
T_2(\xi) & = & (\alpha  -2)\,\xi _1\,\xi _2\,\xi _3 +\xi_3^2+\xi _2\,\xi _4, \\[2mm]
\label{eqn: def of T3 3rd time derivative}
T_3(\xi) & = & (\alpha  -2)\,\xi _1^2\,\xi _4 +\xi _1\,\xi_5+\xi _2 \xi _4,\\[2mm]
\label{eqn: def of T4 3rd time derivative}
T_4(\xi) & = & (\alpha  -3)\,\xi _1^2\,\xi _2^2 +\xi _2^3+2\,\xi _1\,\xi _2\,\xi _3, \\[2mm]
\label{eqn: def of T5 3rd time derivative}      
T_5(\xi) & = & (\alpha  -3)\,\xi _1^3\,\xi _3+\xi_1^2\,\xi _4+2\,\xi _1\,\xi _2\,\xi _3, \\[2mm]
\label{eqn: def of T6 3rd time derivative}
T_6(\xi) & = &  (\alpha  -4)\,\xi _1^4\,\xi _2 +\xi_1^3\,\xi _3+3\,\xi _1^2\,\xi _2^2,\\[2mm]
\label{eqn: def of T7 3rd time derivative}
T_7(\xi) & = & (\alpha  -5)\,\xi _1^6 +5\,\xi _1^4\,\xi _2. 
\end{eqnarray}
\end{subequations}

\end{lemma}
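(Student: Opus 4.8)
The plan is to specialize the general integration-by-parts identity of \thref{lem: general formula for ibp in algorithm} to the case $n=3$, so that $2n-1=5$. First I would enumerate all solutions $\mathcal{P}_j=(p_{j,1},\dotsc,p_{j,5})$ in nonnegative integers of
$$\sum_{i=1}^{5} i\cdot p_{j,i}=5.$$
These are precisely the seven partitions of $5$, namely $(0,0,0,0,1)$, $(1,0,0,1,0)$, $(0,1,1,0,0)$, $(2,0,1,0,0)$, $(1,2,0,0,0)$, $(3,1,0,0,0)$, and $(5,0,0,0,0)$, which accounts for the index range $j=1,\dotsc,7$ in the statement; each one determines the monomial $\mathcal{Q}_j(\xi)=\prod_{i=1}^{5}\xi_i^{p_{j,i}}$ appearing in the definition of $T_j$.

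Next I would evaluate $T_j(\xi)=u^{-\alpha}\,\partial_x\!\left(u^{\alpha}\,\mathcal{Q}_j(\xi)\right)=\alpha\,\xi_1\,\mathcal{Q}_j(\xi)+\partial_x\mathcal{Q}_j(\xi)$ for each of the seven monomials, using the two elementary facts $\partial_x u^{\alpha}=\alpha\,u^{\alpha}\,\xi_1$ and the recursion
$$\partial_x \xi_i=\xi_{i+1}-\xi_1\,\xi_i,$$
the latter following immediately from $\xi_i=u^{-1}\,\partial_x^{\,i}u$. Applying the product rule to $\mathcal{Q}_j$ and substituting this recursion turns each $T_j$ into a polynomial in $\xi_1,\dotsc,\xi_6$, which after collecting like terms matches the claimed expressions; for example the partition $(0,0,0,0,1)$ gives $\alpha\,\xi_1\xi_5+(\xi_6-\xi_1\xi_5)=(\alpha-1)\,\xi_1\xi_5+\xi_6$, which is $T_1$, and the remaining six are obtained the same way.

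The vanishing of each integral is then inherited directly from \thref{lem: general formula for ibp in algorithm}: since $u^{\alpha}\,T_j(\xi)=\partial_x\!\left(u^{\alpha}\,\mathcal{Q}_j\right)$ is an exact derivative in $x$, the integral reduces to a boundary term that vanishes by the decay of $u$ and its $x$-derivatives as $|x|\to\infty$, in the spirit of \thref{thm: ux/u^k=0 as |x|tends to infty}. Thus the only substantive content of the present lemma is the explicit computation of the seven polynomials $T_1,\dotsc,T_7$.

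There is no genuine conceptual obstacle here once the recursion $\partial_x\xi_i=\xi_{i+1}-\xi_1\xi_i$ is in place; the main work, and the chief source of potential error, is the bookkeeping needed to enumerate all seven partitions of $5$ without omission and to simplify each resulting polynomial correctly. A useful consistency check at every step is that each $\mathcal{Q}_j$ has weighted degree $2n-1=5$, so that each $T_j$ is homogeneous of weighted degree $2n=6$ when $\xi_i$ is assigned weight $i$; this homogeneity is manifest in all seven expressions of the statement and rules out the most common transcription mistakes.
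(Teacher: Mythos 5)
Your proposal is correct and takes essentially the same route as the paper, which obtains this lemma by specializing \thref{lem: general formula for ibp in algorithm} to $n=3$: enumerating the seven nonnegative-integer solutions of $\sum_{i=1}^{5} i\,p_{j,i}=5$ and expanding $T_j=u^{-\alpha}\,\partial_x\bigl(u^{\alpha}\,\mathcal{Q}_j(\xi)\bigr)=\alpha\,\xi_1\,\mathcal{Q}_j+\partial_x\mathcal{Q}_j$ via the recursion $\partial_x\xi_i=\xi_{i+1}-\xi_1\,\xi_i$, exactly as you do. Your seven partitions and resulting polynomials all match the stated $T_1,\dotsc,T_7$, and your weighted-homogeneity check is a sound safeguard against bookkeeping errors.
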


\vspace{5mm}

Now we use \thref{lem: IBP formula for 4th time derivative} to rewrite \eqref{eqn: S0 in step 3rd time derivative} as 
\begin{equation}\label{eqn: Halpha=So=S0+T 3rd time derivative}
(\alpha-1)\dfrac{d^3}{d\hat{t}^3}H_{\alpha}[u(2\hat{t})]
=
-\alpha\,(\alpha-1)\int_{\mathbb{R}} u^{\alpha}S_0(\xi)\,dx
=-\alpha\,(\alpha-1)\int_{\mathbb{R}} u^{\alpha}\left((S_0+\sum_{i=1}^{7} c_i\,T_i)(\xi)\right)\,dx,
\end{equation}
where $c_i\in\mathbb{R}$ ($i=1,\dotsc,7$). Our goal is to find $c_i\in\mathbb{R}$ ($i=1,\dotsc,7$) such that 
\begin{equation}
S_{\alpha}(\xi)
:=-\left(S_0+\sum_{i=1}^{7} c_i\,T_i\right)(\xi)\ge0, \quad \forall \xi=(\xi_1,\xi_2,\xi_3,\xi_4,\xi_5)\in\mathbb{R}^{5}.
\end{equation}
With $S_{\alpha}(\xi)$ given in the above equation, \eqref{eqn: Halpha=So=S0+T 3rd time derivative} becomes
\begin{equation}\label{eqn: Halpha and Salpha with factor alpha-1 3rd time derivative}
(\alpha-1)\,\dfrac{d^3}{d\hat{t}^3}H_{\alpha}[u(2\hat{t})]
=-
\alpha\,(\alpha-1)\,\int_{\mathbb{R}} u^{\alpha}
\,S_{\alpha}(\xi)
\,dx.
\end{equation}
It turns out that 
\begin{align}\label{eqn: defi of S alpha 3rd time derivative}
\notag
S_{\alpha}(\xi):=&-\left(S_0+\sum_{i=1}^{7} c_i\,T_i\right)(\xi)
\\[1mm] \notag
=& k_1\,\xi_1^6+k_2\,\xi_1^4\,\xi_2+k_3\,\xi_1^3\,\xi_3+k_4\,\xi_1^2\,\xi_2^2+k_5\,\xi_1\,\xi_2\,\xi_3+k_6\,\xi_3^2
\\[2mm] 
&+k_7\,\xi_2^3+k_8\,\xi_1^2\,\xi_4+k_9\,\xi_2\,\xi_4+k_{10}\,\xi_1\,\xi_5+k_{11}\,\xi_6,
\end{align}
where 
\begin{subequations}\label{eqn: def k1-k11 3rd time derivative}
\begin{eqnarray}
\label{eqn: def of k1}
k_1 & = &(\alpha -5)\,c_7 \\[2mm]
\label{eqn: def of k2}      
k_2 & = & (\alpha -4)\,c_6+5\,c_7, \\[2mm]
\label{eqn: def of k3}
k_3 & = & (\alpha -3)\,c_5+c_6,\\[2mm]
\label{eqn: def of k4}
k_4 & = & (\alpha -3)\,c_4+3\,c_6, \\[2mm]
\label{eqn: def of k5}      
k_5 & = & (\alpha -2)\,c_2+2\left(c_4+c_5\right), \\[2mm]
\label{eqn: def of k6}
k_6 & = & c_2,\\[2mm]
\label{eqn: def of k7}
k_7 & = & (\alpha -2) (\alpha -1)+c_4,\\[2mm]
\label{eqn: def of k8}
k_8 & = & (\alpha -2)\,c_3+c_5,\\[2mm]
\label{eqn: def of k9}
k_9 & = & 3 (\alpha -1)+c_2+c_3,\\[2mm]
\label{eqn: def of k10}
k_{10} & = & (\alpha -1)\,c_1+c_3,\\[2mm]
\label{eqn: def of k11}
k_{11} & = & c_1+1.
\end{eqnarray}
\end{subequations}
We need the following \thref{lem: discriminant for the 6th degree poly corresp. 3rd time derivative} in \cite{Jungel-algorithmic-construction-of-entropies-Nonlinerity-2006} to determine $c_i$ ($i=1,\cdots,7$).  

\vspace{5mm}

\begin{lemma}\thlabel{lem: discriminant for the 6th degree poly corresp. 3rd time derivative} Let $S_{\alpha}(\xi)$ be given by 
\begin{align}
\notag
S_{\alpha}(\xi)=& k_1\,\xi_1^6+k_2\,\xi_1^4\,\xi_2+k_3\,\xi_1^3\,\xi_3+k_4\,\xi_1^2\,\xi_2^2+k_5\,\xi_1\,\xi_2\,\xi_3+k_6\,\xi_3^2
\\[2mm] 
&+k_7\,\xi_2^3+k_8\,\xi_1^2\,\xi_4+k_9\,\xi_2\,\xi_4+k_{10}\,\xi_1\,\xi_5+k_{11}\,\xi_6.
\end{align}
Then $S_{\alpha}(\xi)\ge 0$ for $\xi=(\xi_1,\xi_2,\xi_3,\xi_4,\xi_5,\xi_6)\in\mathbb{R}^6$ if and only if
\begin{enumerate}[(1)]
\item  $k_7=k_8=k_9=k_{10}=k_{11}=0$, 
\end{enumerate}
and one of the following statements holds:
\begin{enumerate}[(2a)]
\item $4\,k_4\,k_6-k_5^2>0$ and $4\,k_1\,k_4\,k_6-k_1\,k_5^2-k_2^2\,k_6-k_3^2\,k_4+k_2\,k_3\,k_5\ge0$.
\item $4\,k_4\,k_6-k_5^2=2\,k_2\,k_6-k_3\,k_5=0$ and $4\,k_1\,k_6-k_3^2\ge0$.
\end{enumerate}
\end{lemma}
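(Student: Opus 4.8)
The plan is to peel off, one variable at a time, the degrees of freedom that obstruct non-negativity unless their coefficients vanish, thereby forcing condition (1); then to recognise the surviving expression as an honest quadratic form in three auxiliary quantities and reduce the problem to positive semidefiniteness of a $3\times3$ symmetric matrix; and finally to translate that semidefiniteness into (2a)--(2b), as in Lemma~11 of \cite{Jungel-algorithmic-construction-of-entropies-Nonlinerity-2006}.

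First I would establish the necessity of (1). Observe that $\xi_4,\xi_5,\xi_6$ occur in $S_\alpha(\xi)$ only to the first power: the $\xi_6$-dependence is $k_{11}\,\xi_6$, the $\xi_5$-dependence is $k_{10}\,\xi_1\,\xi_5$, and the $\xi_4$-dependence is $(k_8\,\xi_1^2+k_9\,\xi_2)\,\xi_4$. A polynomial that is linear in a free real variable is bounded below only if that coefficient vanishes identically. Letting $\xi_6$, then $\xi_5$, then $\xi_4$ run to $\pm\infty$ with the remaining variables fixed forces $k_{11}=0$, then $k_{10}\,\xi_1=0$ for all $\xi_1$ (so $k_{10}=0$), then $k_8\,\xi_1^2+k_9\,\xi_2=0$ for all $\xi_1,\xi_2$ (so $k_8=k_9=0$). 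With these gone, setting $\xi_1=\xi_3=\xi_4=\xi_5=\xi_6=0$ leaves $S_\alpha=k_7\,\xi_2^3$, an odd power, so non-negativity forces $k_7=0$. This is exactly (1).

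Next, under (1) the expression collapses to $k_1\,\xi_1^6+k_2\,\xi_1^4\,\xi_2+k_3\,\xi_1^3\,\xi_3+k_4\,\xi_1^2\,\xi_2^2+k_5\,\xi_1\,\xi_2\,\xi_3+k_6\,\xi_3^2$. The key observation is that these six monomials are precisely the pairwise products of $a:=\xi_1^3$, $b:=\xi_1\,\xi_2$, and $c:=\xi_3$, so that $S_\alpha=Q(a,b,c)$, where $Q$ is the quadratic form with Gram matrix
\[
M=\begin{pmatrix} k_1 & k_2/2 & k_3/2\\ k_2/2 & k_4 & k_5/2\\ k_3/2 & k_5/2 & k_6\end{pmatrix}.
\]
The map $(\xi_1,\xi_2,\xi_3)\mapsto(a,b,c)$ is not onto $\mathbb{R}^3$ (its image is $\{a\neq0\}\cup\{a=b=0\}$), but this image is dense and $Q$ is continuous; hence $S_\alpha\ge0$ for all $\xi$ is equivalent to $Q\ge0$ on a dense subset, which in turn is equivalent to $Q\ge0$ on all of $\mathbb{R}^3$, i.e.\ to $M\succeq0$.

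Finally I would read off (2a)--(2b) from $M\succeq0$. Necessarily $k_4,k_6\ge0$ (diagonal entries of a semidefinite matrix), the trailing $2\times2$ principal minor equals $\tfrac14(4k_4k_6-k_5^2)$, and a direct expansion gives $\det M=\tfrac14(4k_1k_4k_6-k_1k_5^2-k_2^2k_6-k_3^2k_4+k_2k_3k_5)$. Case (2a) is the nondegenerate case in which this $2\times2$ minor is strictly positive; then that block is positive definite, and the Schur complement reduces $M\succeq0$ to $\det M\ge0$. Case (2b) is the degenerate case in which the minor vanishes, so the form drops rank: this forces the linked relation $2k_2k_6-k_3k_5=0$ and leaves the single inequality $4k_1k_6-k_3^2\ge0$. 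The step I expect to be the main obstacle is precisely this reduction: since the substitution $(\xi_1,\xi_2,\xi_3)\mapsto(\xi_1^3,\xi_1\xi_2,\xi_3)$ is not surjective, one must argue carefully via the density/continuity argument above that constrained non-negativity over the image coincides with unconstrained positive semidefiniteness, and then treat the rank-deficient case (2b) by hand.
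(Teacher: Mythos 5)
Your treatment of part (1) is essentially the paper's own argument, systematized: the paper also proves $k_7=k_{11}=0$, then $k_{10}=0$, then $k_8=0$, then $k_9=0$ by evaluating $S_\alpha$ at special points, and your observation that $\xi_4,\xi_5,\xi_6$ enter only linearly (with the coefficient $k_8\,\xi_1^2+k_9\,\xi_2$ of $\xi_4$ handled as a polynomial identity) is if anything cleaner than the paper's last bullet, which asserts $k_9=0$ without detail. Where you genuinely diverge is part (2): the paper does not prove (2a)--(2b) at all, but imports them from Lemma 11 of J\"{u}ngel--Matthes. Your self-contained substitute is the right idea and mostly correct: the six surviving monomials are exactly the pairwise products of $(a,b,c)=(\xi_1^3,\xi_1\xi_2,\xi_3)$, the image $\{a\neq0\}\cup\{a=b=0\}$ of this substitution is dense, and continuity then makes ``$S_\alpha\ge0$ for all $\xi$'' equivalent to $M\succeq0$; your minor and determinant computations also check out, e.g.\ $4\det M=4k_1k_4k_6-k_1k_5^2-k_2^2k_6-k_3^2k_4+k_2k_3k_5$.

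There is, however, a genuine gap in your final translation of $M\succeq0$ into (2a)/(2b), in the sufficiency direction. From $4k_4k_6-k_5^2>0$ alone the trailing $2\times2$ block is \emph{definite}, not \emph{positive} definite: its sign is undetermined. Your remark that ``necessarily $k_4,k_6\ge0$'' is available only in the necessity direction, where $M\succeq0$ is already assumed; when arguing from (2a) toward $M\succeq0$ you may not invoke it. Concretely, $k_1=1$, $k_4=k_6=-1$, $k_2=k_3=k_5=0$ (with (1) in force) satisfies (2a) verbatim, since $4k_4k_6-k_5^2=4>0$ and $4k_1k_4k_6-k_1k_5^2-k_2^2k_6-k_3^2k_4+k_2k_3k_5=4\ge0$, yet
\begin{equation*}
S_\alpha=\xi_1^6-\xi_1^2\xi_2^2-\xi_3^2
\end{equation*}
equals $-3$ at $(\xi_1,\xi_2,\xi_3)=(1,2,0)$. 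Likewise (2b) is not sufficient as stated: $k_2=1$ and all other $k_i=0$ satisfies (2b), while $S_\alpha=\xi_1^4\,\xi_2$ changes sign. So the equivalence cannot be closed in the form you (and the lemma as printed) assert; a sign hypothesis such as $k_6>0$ is missing, exactly parallel to the clause $k_4>0$ that does appear in the quartic analogue, \thref{lem: discriminant for quadratic}. With $k_6>0$ adjoined your argument does close: $4k_4k_6-k_5^2>0$ then forces $k_4>0$, the block is positive definite, and the Schur complement yields the equivalence in case (2a); in the degenerate case the Schur complement with respect to $k_6$ reduces $M\succeq0$ precisely to $2k_2k_6-k_3k_5=0$ together with $4k_1k_6-k_3^2\ge0$, which is (2b). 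In the paper's application the omission is harmless because there $k_6=c_2=4>0$, but your write-up should either add this hypothesis or treat the case $k_6=0$ separately (where PSD forces $k_3=k_5=0$ and the problem degenerates to quartic-type conditions on $k_1,k_2,k_4$ not captured by (2a)--(2b)).
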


\begin{proof}
(2a) and (2b) follows from \cite{Jungel-algorithmic-construction-of-entropies-Nonlinerity-2006}. From (2a) and (2b), it follows that $k_1\ge0$. We prove (1) as follows:
\begin{itemize}
  \item Since $S_{\alpha}(0,\xi_2,0,0,\xi_5,\xi_6)=k_7\,\xi _2^3+k_{11}\,\xi _6$, we obtain $k_7=k_{11}=0$.
  \item When $k_7=k_{11}=0$, $S_{\alpha}(1,0,0,0,\xi_5,\xi_6)=k_1+k_{10}\,\xi_5$, which yields $k_{10}=0$.
  \item When $k_7=k_{10}=k_{11}=0$, $S_{\alpha}(1,0,0,\xi_4,\xi_5,\xi_6)=k_1+k_8\,\xi_4$. This gives $k_8=0$.
  \item When $k_7=k_8=k_{10}=k_{11}=0$, $S_{\alpha}(0,\xi_2,0,\xi_4,\xi_5,\xi_6)=k_9\,\xi_2\,\xi_4$. We show that $k_9=0$.
\end{itemize}
This completes the proof.
\end{proof}
We use (1) in \thref{lem: discriminant for the 6th degree poly corresp. 3rd time derivative} to determine $c_i$ ($i=1,\dotsc,5$) as follows:
\begin{subequations}\label{eqn: c1 to c7 determine 3rd time derivtive}
\begin{eqnarray}
\label{eqn: c1 3rd time derivative}
c_1 & = & 0, \\[2mm]
\label{eqn: c2 3rd time derivative}
c_2 & = & 4, \\[2mm]
\label{eqn: c3 3rd time derivative}
c_3 & = & -1,\\[2mm]
\label{eqn: c4 3rd time derivative}
c_4 & = & \alpha-2,\\[2mm]
\label{eqn: c5 3rd time derivative}
c_5 & = & \alpha-2.
\end{eqnarray}
\end{subequations}
Using \eqref{eqn: c1 to c7 determine 3rd time derivtive}, \eqref{eqn: defi of S alpha 3rd time derivative} becomes
\begin{align}\label{eqn: defi of S alpha 3rd time derivative under some ci's}
S_{\alpha}(\xi)=&(\alpha -5)\,c_7\,\xi _1^6+\left((\alpha -4)\,c_6+5\,c_7\right)\,\xi _1^4\,\xi _2 + \left((\alpha-2)\,(\alpha-3) +c_6\right)\,\xi_1^3\,\xi _3
\\[1mm] \notag
&+\left((\alpha-2)\,(\alpha-3) +3\,c_6\right)\,\xi _1^2\,\xi _2^2+8\,(\alpha -2)\,\xi _1\,\xi _2\,\xi _3+4 \,\xi _3^2.
\end{align}
Using  (2a) and (2b) in \thref{lem: discriminant for the 6th degree poly corresp. 3rd time derivative} , we are led to
\begin{equation}\label{eqn: set of alpha 3rd time derivative}
\alpha\in(\alpha_0,1)\cup(1,2],
\end{equation}
where $\alpha_0\approx0.389214$ is the real root of the cubic equation $9 \alpha^3-12 \alpha^2+29 \alpha-10=0$.

\vspace{5mm}

\begin{theorem}[Third-order time derivative]\thlabel{def: Third Time Derivative}
Let $u=u(x,t)$ be the solution of \eqref{eqn: Cauchy problem for heat eqn}. Suppose that $\alpha\in(\alpha_0,1)\cup(1,2]$, where $\alpha_0\approx0.389214$ is the real root of the cubic equation $9 \alpha^3-12 \alpha^2+29 \alpha-10=0$. Then for $t>0$,
$$
\dfrac{d^3}{dt^3}H_{\alpha}[u(t)]\ge0.
$$

\end{theorem}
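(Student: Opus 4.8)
The plan is to specialize the entropy-construction algorithm of Section~\ref{sec: Algorithmic construction of entropies} to $n=3$ and reduce the assertion to the pointwise nonnegativity of the sextic form $S_\alpha(\xi)$ built in this section.

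First I would record the exact identity that the algorithm has already delivered. Using Fa\`a di Bruno's formula (\thref{thm: Faa di Bruno's formula use to find general formula for S0}) together with $u_{\hat t}=u_{xx}$ to compute $S_0$, adding the integration-by-parts relations of \thref{lem: IBP formula for 4th time derivative} (each of which integrates to zero against $u^\alpha$), and collecting monomials as in \eqref{eqn: defi of S alpha 3rd time derivative}, one is led to the representation \eqref{eqn: Halpha and Salpha with factor alpha-1} specialized to $n=3$, in which the prefactor $(-1)^{n+1}$ equals $+1$. The decisive feature is that the factor $(\alpha-1)$ appears on both sides; cancelling it for $\alpha\neq1$ gives
\begin{equation*}
\dfrac{d^3}{d\hat t^3}H_\alpha[u(2\hat t)]=\alpha\int_{\mathbb{R}}u^\alpha\,S_\alpha(\xi)\,dx,
\end{equation*}
so that the sign of the right-hand side no longer depends on whether $\alpha$ lies above or below $1$. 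Since $\alpha>0$ and $u^\alpha\ge0$, the whole theorem reduces to proving $S_\alpha(\xi)\ge0$ for every $\xi$; the scaling relation $\frac{d^3}{dt^3}H_\alpha[u(t)]=\frac18\frac{d^3}{d\hat t^3}H_\alpha[u(2\hat t)]$ then transfers this to the original time variable with the sign $\ge0$ claimed.

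The heart of the matter is thus the positivity of the form. I would first impose condition~(1) of \thref{lem: discriminant for the 6th degree poly corresp. 3rd time derivative}, that is $k_7=k_8=k_9=k_{10}=k_{11}=0$, which annihilates the monomials containing $\xi_4,\xi_5,\xi_6$ as well as the pure $\xi_2^3$ term and thereby fixes $c_1,\dots,c_5$ exactly as in \eqref{eqn: c1 to c7 determine 3rd time derivtive}. After this substitution $S_\alpha$ collapses to the genuine ternary sextic \eqref{eqn: defi of S alpha 3rd time derivative under some ci's} in $(\xi_1,\xi_2,\xi_3)$, whose coefficients involve only the still-free constants $c_6,c_7$ together with the fixed quantities $k_5=8(\alpha-2)$ and $k_6=4$. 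It then remains to choose $c_6,c_7$ so that the semialgebraic criteria (2a)--(2b) of the same lemma are met on the largest possible set of $\alpha$.

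The hard part will be this final elimination. Conditions (2a)--(2b) are polynomial (in)equalities in $\alpha,c_6,c_7$ — for instance $4k_4k_6-k_5^2>0$ and $4k_1k_4k_6-k_1k_5^2-k_2^2k_6-k_3^2k_4+k_2k_3k_5\ge0$ — and carving out the sharp admissible interval $(\alpha_0,1)\cup(1,2]$ amounts to solving this coupled system, identifying $\alpha_0$ as the real root of $9\alpha^3-12\alpha^2+29\alpha-10=0$ and checking the closed endpoint $\alpha=2$ together with the exclusion of $\alpha=1$. I would treat $c_7$ as tuning the leading coefficient $k_1=(\alpha-5)c_7$ and $c_6$ as tuning $k_3,k_4$, use one of the two discriminant branches to remove a degree of freedom, and so reduce to a single scalar inequality in $\alpha$ whose boundary is precisely the stated cubic; verifying that positivity first fails as $\alpha\downarrow\alpha_0$ and as $\alpha$ exceeds $2$ would establish sharpness. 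With $S_\alpha\ge0$ secured on this interval, the displayed representation and the factor $\tfrac18$ from scaling finish the proof.
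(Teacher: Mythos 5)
Your proposal is correct and takes essentially the same route as the paper: the algorithmic identity $\frac{d^3}{d\hat t^3}H_\alpha[u(2\hat t)]=\alpha\int_{\mathbb{R}}u^\alpha S_\alpha(\xi)\,dx$ after cancelling the factor $(\alpha-1)$ (with $(-1)^{n+1}=+1$ for $n=3$), condition (1) of \thref{lem: discriminant for the 6th degree poly corresp. 3rd time derivative} to fix $c_1,\dots,c_5$ as in \eqref{eqn: c1 to c7 determine 3rd time derivtive}, and criteria (2a)--(2b) applied to the reduced sextic \eqref{eqn: defi of S alpha 3rd time derivative under some ci's} to obtain $\alpha\in(\alpha_0,1)\cup(1,2]$ with $\alpha_0$ the real root of $9\alpha^3-12\alpha^2+29\alpha-10=0$. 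The one step you leave schematic---the elimination of $c_6,c_7$ from the discriminant conditions---is precisely the computation the paper also compresses into ``we are led to'' \eqref{eqn: set of alpha 3rd time derivative}, so nothing essential is missing relative to the published argument.
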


\vspace{5mm}

In particular, we find in \eqref{eqn: Halpha and Salpha with factor alpha-1 3rd time derivative}:
\begin{enumerate}[(a)]
  \item Letting $\alpha\to1$ and $(c_6,c_7)=(\frac{2}{3},\frac{-2}{15})$ in \eqref{eqn: defi of S alpha 3rd time derivative under some ci's} gives
\begin{equation}\label{}
\dfrac{d^3}{dt^3}H_{\alpha}[u(t)]
=
\frac{1}{8}\dfrac{d^3}{d\hat{t}^3}H_{\alpha}[u(2\hat{t})]
=\frac{1}{2}\int_{\mathbb{R}} u\,
\left(
\left(\frac{1}{3}\,\xi_1^3-\xi_1\,\xi_2+\xi_3\right)^2
+\frac{1}{45}\xi_1^6
\right)
\,dx,
\end{equation}  
which recovers Theorem 1. in \cite{Cheng-Higher-order-derivatives-Costa-IEEE-2015}.

\item Letting $\alpha\to1$ and $(c_6,c_7)=(\frac{1283}{1102},\frac{-39}{80})$ in \eqref{eqn: defi of S alpha 3rd time derivative under some ci's} gives another representation as follows: 
   
\begin{align}
\dfrac{d^3}{dt^3}H_{\alpha}[u(t)]
=
\frac{1}{8}\dfrac{d^3}{d\hat{t}^3}H_{\alpha}[u(2\hat{t})]
=
&\frac{1}{8}\int_{\mathbb{R}} u\,
\Bigg(
\left(\frac{1}{2} \sqrt{\frac{39}{5}}\,\xi
   _1^3-\frac{17427}{8816}\sqrt{\frac{15}{13}}\,\xi _1
   \xi _2+\frac{3487}{1102}\sqrt{\frac{5}{39}}\,
   \xi _3\right)^2
\\[1mm] \notag
&+\left(\frac{\sqrt{
   994272857}}{8816\sqrt{13}}\,\xi _1 \xi
   _2-\frac{201352319}{1102
   \sqrt{12925547141}}\,\xi _3\right)^2
\\[1mm] \notag   
&+\frac{219395023060}{1643533032621}\,\xi
   _3^2
   \Bigg)\,dx.
\end{align}

\end{enumerate}

\vspace{5mm}
\section{Fourth-order time derivative}\label{sec: Fourth-order time derivative}
\vspace{5mm}

We first use \eqref{eqn: S0 in step in algorithm} with $n=4$ to find  
\begin{equation}\label{eqn: S0 in step 4th time derivative}
(\alpha-1)\dfrac{d^4}{d\hat{t}^4}H_{\alpha}[u(2\hat{t})]
=
-(\alpha-1)\int_\mathbb{R}\dfrac{\partial^4}{\partial \hat{t}^4}u^{\alpha}\,dx
=
-\alpha\,(\alpha-1)\int_{\mathbb{R}} u^{\alpha}S_0(\xi)\,dx.
\end{equation}
Then we employ \thref{lem: general formula for ibp in algorithm} with $n=4$ to find \textit{all possible} integration by parts formulas in 

\vspace{5mm}

\begin{lemma}\thlabel{lem: IBP formula for 4th time derivative 2}
\begin{equation}\label{eqn: IBP formulae 4th derivative}
\int_{\mathbb{R}}u^{\alpha}\,T_j(\xi)\,dx=0, \quad j=1,2,\dotsc,15,
\end{equation}
where 
\begin{subequations}\label{eqn: def of T1 to T15 4th time derivative}
\begin{eqnarray}
\label{eqn: def of T1 4th time derivative}
T_1(\xi) & = & (\alpha  -1)\,\xi _1\,\xi _7 +\xi _8, \\[2mm]
\label{eqn: def of T2 4th time derivative}
T_2(\xi) & = &  (\alpha  -2)\,\xi _1\,\xi _3\,\xi _4+\xi _4^2+\xi _3\,\xi _5, \\[2mm]
\label{eqn: def of T3 4th time derivative}
T_3(\xi) & = &  (\alpha  -2)\,\xi _1\,\xi _2\,\xi _5+\xi _3\,\xi _5+\xi _2\,\xi _6, \\[2mm]
\label{eqn: def of T4 4th time derivative}
T_4(\xi) & = & (\alpha  -3)\,\xi _1\,\xi _3\,\xi _2^2+\xi _4\,\xi _2^2+2\,\xi _3^2 \xi _2, \\[2mm]
\label{eqn: def of T5 4th time derivative}
T_5(\xi) & = & (\alpha  -2)\,\xi _6\,\xi _1^2 +\xi _7\,\xi _1+\xi _2\,\xi _6, \\[2mm]
\label{eqn: def of T6 4th time derivative}
T_6(\xi) & = & (\alpha  -3)\,\xi _1^2\,\xi _3^2 +\xi _2\,\xi _3^2+2 \xi _1\,\xi _4\,\xi _3, \\[2mm]
\label{eqn: def of T7 4th time derivative}
T_7(\xi) & = & (\alpha  -3)\,\xi _2\,\xi _4\,\xi _1^2 +\xi _3\,\xi _4\,\xi _1+\xi _2 \,\xi _5\,\xi _1+\xi _2^2\,\xi _4, \\[2mm]
\label{eqn: def of T8 4th time derivative}
T_8(\xi) & = &(\alpha  -4)\,\xi _1^2\,\xi _2^3 +\xi _2^4+3\,\xi _1\,\xi _3\,\xi _2^2, \\[2mm]
\label{eqn: def of T9 4th time derivative}
T_9(\xi) & = & (\alpha  -3)\,\xi _5\,\xi _1^3 +\xi _6\,\xi _1^2+2\,\xi _2\,\xi _5\,\xi _1, \\[2mm]
\label{eqn: def of T10 4th time derivative}
T_{10}(\xi) & = & (\alpha  -4)\,\xi _2\,\xi _3\,\xi _1^3 +\xi _3^2\,\xi _1^2+\xi _2\, \xi _4\,\xi_1^2+2\,\xi _2^2\,\xi _3\,\xi _1, \\[2mm]
\label{eqn: def of T11 4th time derivative}
T_{11}(\xi) & = & (\alpha  -4)\,\xi _4\,\xi _1^4 +\xi _5\,\xi _1^3+3\,\xi _2\,\xi _4 \,\xi _1^2, \\[2mm]
\label{eqn: def of T12 4th time derivative}
T_{12}(\xi) & = & (\alpha  -5)\,\xi _2^2\,\xi _1^4+2\,\xi _2\,\xi _3\,\xi _1^3+3\,\xi _2^3\,\xi_1^2, \\[2mm]
\label{eqn: def of T13 4th time derivative}
T_{13}(\xi) & = & (\alpha  -5)\,\xi _3\,\xi _1^5 +\xi _4\,\xi _1^4+4\,\xi _2\,\xi _3\, \xi _1^3, \\[2mm]
\label{eqn: def of T14 4th time derivative}
T_{14}(\xi) & = & (\alpha  -6)\,\xi _2\,\xi _1^6 +\xi _3\,\xi _1^5+5\,\xi _2^2\,\xi _1^4, \\[2mm]
\label{eqn: def of T15 4th time derivative}
T_{15}(\xi) & = & (\alpha  -7)\,\xi _1^8 +7\,\xi _2\,\xi _1^6.
\end{eqnarray}
\end{subequations}


\end{lemma}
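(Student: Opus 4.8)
The plan is to read off \thref{lem: IBP formula for 4th time derivative 2} as the case $n=4$ of the general identity \thref{lem: general formula for ibp in algorithm}, and then to verify that the fifteen expressions $T_1,\dots,T_{15}$ are precisely those the identity produces. The only algebraic ingredient I would need is a closed form for $T_j=u^{-\alpha}\partial_x\!\bigl(u^{\alpha}\prod_i\xi_i^{p_{j,i}}\bigr)$. Since $\xi_i=u^{-1}\partial_x^{\,i}u$, the chain rule gives $\partial_x u^{\alpha}=\alpha\,\xi_1\,u^{\alpha}$ and the quotient rule gives $\partial_x\xi_i=\xi_{i+1}-\xi_1\xi_i$; differentiating the product logarithmically then yields
\[
T_j(\xi)=\Bigl(\alpha-\sum_{i}p_{j,i}\Bigr)\xi_1\prod_{i=1}^{7}\xi_i^{p_{j,i}}+\sum_{i=1}^{7}p_{j,i}\,\frac{\xi_{i+1}}{\xi_i}\prod_{k=1}^{7}\xi_k^{p_{j,k}}.
\]
This single formula drives the entire computation.

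Next I would enumerate the index set. With $n=4$ the constraint \eqref{eqn: eqn for sum of ip=2m-1} becomes $\sum_{i=1}^{7}i\,p_{i}=7$, whose solutions in $\mathbb{N}\cup\{0\}$ stand in bijection with the partitions of $7$; as $p(7)=15$ there are exactly fifteen, matching the stated range $j=1,\dots,15$. I would then substitute each partition into the identity above. For instance $7=7$ (i.e.\ $p_7=1$) gives $(\alpha-1)\xi_1\xi_7+\xi_8$; the partition $7=1+2+4$ (i.e.\ $p_1=p_2=p_4=1$) gives $(\alpha-3)\xi_1^2\xi_2\xi_4+\xi_1\xi_3\xi_4+\xi_1\xi_2\xi_5+\xi_2^2\xi_4$; and $7=1+\cdots+1$ (i.e.\ $p_1=7$) gives $(\alpha-7)\xi_1^8+7\xi_1^6\xi_2$, reproducing \eqref{eqn: def of T1 4th time derivative}, \eqref{eqn: def of T7 4th time derivative} and \eqref{eqn: def of T15 4th time derivative}. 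A convenient bookkeeping check is that the coefficient $\alpha-\sum_i p_{j,i}$ of the leading monomial equals $\alpha$ minus the number of parts of the partition, so it runs through $\alpha-1,\alpha-2,\dots,\alpha-7$ as the number of parts increases from one to seven; carrying this out for all fifteen partitions recovers the full list \eqref{eqn: def of T1 to T15 4th time derivative} line by line.

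It remains to justify that each integral vanishes. By construction $u^{\alpha}T_j=\partial_x\!\bigl(u^{\alpha}\prod_i\xi_i^{p_{j,i}}\bigr)$ is an exact $x$-derivative, so $\int_{\mathbb{R}}u^{\alpha}T_j\,dx=\bigl[u^{\alpha}\prod_i\xi_i^{p_{j,i}}\bigr]_{x=-\infty}^{x=+\infty}$, and I would show this boundary term vanishes exactly as in \thref{lem: general formula for ibp in algorithm}: the ratios $\xi_i=u^{-1}\partial_x^{\,i}u$ grow at most polynomially in $x$ along the heat flow, so $\prod_i\xi_i^{p_{j,i}}$ is polynomially bounded, whereas $u^{\alpha}$ decays faster than any polynomial as $|x|\to\infty$, the same decay input (Proposition 2 of \cite{Cheng-Higher-order-derivatives-Costa-IEEE-2015}) already used in \thref{thm: ux/u^k=0 as |x|tends to infty}.

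I expect the main obstacle to lie in this last step rather than in the algebra. When $\sum_i p_{j,i}>\alpha$ — which occurs for several of the fifteen terms, for example $p_1=7$ against a possibly small $\alpha$ — the factor $u^{\alpha-\sum_i p_{j,i}}$ blows up as $u\to0$, so the boundary cancellation is not automatic and genuinely relies on balancing the at-most-polynomial growth of the $\xi_i$ against the faster-than-polynomial decay of $u^{\alpha}$; once that balance is secured, the bulk of the remaining work is the purely mechanical verification of the fifteen explicit expansions.
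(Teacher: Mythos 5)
Your proposal is correct and follows essentially the same route as the paper, which proves the lemma simply by instantiating \thref{lem: general formula for ibp in algorithm} at $n=4$: the fifteen solutions of $\sum_{i=1}^{7} i\,p_i=7$ are the partitions of $7$, your closed form $T_j=\bigl(\alpha-\sum_i p_{j,i}\bigr)\xi_1\prod_i\xi_i^{p_{j,i}}+\sum_i p_{j,i}\,\xi_{i+1}\xi_i^{-1}\prod_k\xi_k^{p_{j,k}}$ (from $\partial_x u^{\alpha}=\alpha\,\xi_1 u^{\alpha}$ and $\partial_x\xi_i=\xi_{i+1}-\xi_1\xi_i$) reproduces all fifteen expressions, and the vanishing of the boundary term $u^{\alpha}\prod_i\xi_i^{p_{j,i}}$ at $\pm\infty$ rests on the same decay input (Proposition 2 of \cite{Cheng-Higher-order-derivatives-Costa-IEEE-2015}, as in \thref{thm: ux/u^k=0 as |x|tends to infty}) that the paper itself invokes. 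If anything, your treatment is more explicit than the paper's, which states the lemma without written proof; your closing caveat about the case $\sum_i p_{j,i}>\alpha$ correctly identifies the only point left implicit there.
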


\vspace{5mm}

Now we use \thref{lem: IBP formula for 4th time derivative} to rewrite \eqref{eqn: S0 in step 4th time derivative} as 
\begin{equation}\label{eqn: Halpha=So=S0+T 4th time derivative}
(\alpha-1)\dfrac{d^4}{d\hat{t}^4}H_{\alpha}[u(2\hat{t})]
=
-\alpha\,(\alpha-1)\int_{\mathbb{R}} u^{\alpha}S_0(\xi)\,dx
=-\alpha\,(\alpha-1)\int_{\mathbb{R}} u^{\alpha}\left((S_0+\sum_{i=1}^{15} c_i\,T_i)(\xi)\right)\,dx,
\end{equation}
where $c_i\in\mathbb{R}$ ($i=1,\dotsc,15$). Our goal is to find $c_i\in\mathbb{R}$ ($i=1,\dotsc,15$) such that 
\begin{equation}
S_{\alpha}(\xi)
:=-\left(S_0+\sum_{i=1}^{15} c_i\,T_i\right)(\xi)\ge0, \quad \forall \xi=(\xi_1,\xi_2,\xi_3,\xi_4,\xi_5,\xi_6,\xi_7,\xi_8)\in\mathbb{R}^{8}.
\end{equation}
With $S_{\alpha}(\xi)$ given in the above equation, \eqref{eqn: Halpha=So=S0+T 4th time derivative} becomes
\begin{equation}\label{eqn: Halpha and Salpha with factor alpha-1 4th time derivative}
(\alpha-1)\,\dfrac{d^4}{d\hat{t}^4}H_{\alpha}[u(2\hat{t})]
=-
\alpha\,(\alpha-1)\,\int_{\mathbb{R}} u^{\alpha}
\,S_{\alpha}(\xi)
\,dx.
\end{equation}
It turns out that 
\begin{align}\label{eqn: defi of S alpha 4th time derivative}
S_{\alpha}(\xi):=&\left(S_0+\sum_{i=1}^{15} c_i\,T_i\right)(\xi)
\\[1mm] \notag
=& \sum_{j=1}^{22} k_j\,\mathcal{Q}_j(\xi),
\end{align}
where $\xi=(\xi_1,\xi_2,\dotsc,\xi_{8})\in\mathbb{R}^{8}$ and

\begin{equation}
\mathcal{Q}_j(\xi):=
\prod_{i=1}^{8}
\xi_i^{p_{j,i}}
\quad j=1,2,\dotsc,22.
\end{equation}
In the above equation, for $j=1,2,\dotsc,22$, $\mathcal{P}_j=(p_{j,1},p_{j,2},\dotsc,p_{j,2\,m})$ is a solution of 
\begin{equation}
\sum_{i=1}^{8}i \cdot p_{j,i}=8, \quad p_{j,i}\in \mathbb{N}\cup\{0\}.
\end{equation}

We can show in the same manner as the proof of (1) in \thref{lem: discriminant for the 6th degree poly corresp. 3rd time derivative} that the coefficients of each of   
\begin{equation}
\xi_{8},\quad \xi_{3}\,\xi_{5}, \quad  \xi_{2}\,\xi_{6}, \quad \xi_{2}\,\xi_{3}^2, \quad \xi_{1}\,\xi_{7}, \quad  \xi_{1}\,\xi_{2}\,\xi_{5}, \quad \xi_{1}^2\,\xi_{6}, \quad \xi_{1}^3\,\xi_{5}
\end{equation}
in $S_{\alpha}(\xi)=\sum_{j=1}^{22} k_j\,\mathcal{Q}_j(\xi)$ must vanish. This yields 
\begin{subequations}\label{eqn: ci's determine 4th time derivtive}
\begin{eqnarray}
\label{eqn: c1 4th time derivative}
c_1 & = & 0, \\[2mm]
\label{eqn: c2 4th time derivative}
c_2 & = & 5, \\[2mm]
\label{eqn: c3 4th time derivative}
c_3 & = & -5,\\[2mm]
\label{eqn: c5 4th time derivative}
c_5 & = & 1,\\[2mm]
\label{eqn: c6 4th time derivative}
c_6 & = & -2\,c_4,\\[2mm] 
\label{eqn: c7 4th time derivative}
c_7 & = & 7\,(\alpha-2),\\[2mm] 
\label{eqn: c9 4th time derivative}
c_9 & = & 2-\alpha,\\[2mm] 
\label{eqn: c11 4th time derivative}
c_{11} & = & (\alpha-2)\,(\alpha-3).
\end{eqnarray}
\end{subequations}
Under the condition \eqref{eqn: ci's determine 4th time derivtive}, \eqref{eqn: defi of S alpha 4th time derivative} becomes
\begin{align}\label{eqn: defi of S alpha 4th time derivative under some ci's}
S_{\alpha}(\xi)
=&
(\alpha -7)\,c_{15}\,\xi _1^8+\left((\alpha-6)\,c_{14}+7\,c_{15}\right)\,\xi _1^6\,\xi _2+\left((\alpha-5)\,c_{12}+5\,c_{14}\right)\,\xi _1^4\,\xi _2^2  
\\[1mm] \notag
&+
\left((\alpha-2)\,(\alpha-3)+c_8\right)\,\xi _2^4 +\left((\alpha -5)\,c_{13}+c_{14}\right)\,\xi _1^5\,\xi _3  +\left((\alpha-4)\,c_8+3 c_{12}\right)\,\xi_1^2\,\xi _2^3  
\\[1mm] \notag
&+
\left((\alpha-4)\,c_{10}+2\,c_{12}+4\,c_{13}\right)\,\xi _1^3\,\xi _2\,\xi _3  +\left(2\,c_4\,(3-\alpha)+c_{10}\right)\, \xi_1^2 \,\xi _3^2
\\[1mm] \notag
&+
\left((\alpha-3)\,c_4+3\,c_8+2\,c_{10}\right)\,\xi _1\,\xi _2^2\,\xi _3+\left((\alpha-2)\,(\alpha-3)\,(\alpha-4)+c_{13}\right)\,\xi _1^4\,\xi _4  
\\[1mm] \notag
&+
\left(10\,(\alpha-2)\,(\alpha-3)+c_{10}\right)\,\xi_1^2\,\xi _2\,\xi _4+\left(13\,(\alpha-2) +c_4\right)\, \xi _2^2 \,\xi _4
\\[1mm] \notag
&+
4\left(3\,(\alpha-2)-c_4\right)\xi _1\,\xi _3\,\xi _4 +8\,\xi _4^2. 
\end{align}
We can rewrite \eqref{eqn: defi of S alpha 4th time derivative under some ci's} in the matrix form
\begin{equation}
S_{\alpha}(\xi)
=
\begin{bmatrix}
\xi_1^4 & \xi_1^2\,\xi_2  &  \xi_1\,\xi_3 &  \xi_2^2 &  \xi_4 \\
\end{bmatrix}
M
\begin{bmatrix}
\xi_1^4\\
\xi_1^2\,\xi_2\\
\xi_1\,\xi_3\\
\xi_2^2\\
\xi_4\\
\end{bmatrix},
\end{equation}
where 
\begin{equation}
M=
\begin{bmatrix}
b_{11} & b_{12}  &  b_{13} &  b_{14} &  b_{15} \\
b_{12} & b_{22}  &  b_{23} &  b_{24} &  b_{25} \\
b_{13} & b_{23}  &  b_{33} &  b_{34} &  b_{35} \\
b_{14} & b_{24}  &  b_{34} &  b_{44} &  b_{45} \\
b_{15} & b_{25}  &  b_{35} &  b_{45} &  b_{55} \\
\end{bmatrix}
\end{equation}
is a symmetric $5\times5$ matrix. Employing \thref{thm: positive semi-definite <=>eigenvalues>=0}, we use the Mathematica commands \verb"NMaximize" and \verb"Eigenvalues" to numerically find the condition $\alpha\in(1,2)$ which ensures the following 

\vspace{5mm}

\begin{theorem}[Fourth-order time derivative]\thlabel{def: 4th Time Derivative}
Let $u=u(x,t)$ be the solution of \eqref{eqn: Cauchy problem for heat eqn}. Suppose that $\alpha\in(1,2)$. Then for $t>0$,
$$
\dfrac{d^4}{dt^4}H_{\alpha}[u(t)]\le0.
$$

\end{theorem}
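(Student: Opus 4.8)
The plan is to run the algorithm of Section~\ref{sec: Algorithmic construction of entropies} through to the end, the one genuinely new feature being that the positivity of the degree-$8$ polynomial $S_\alpha$ is now governed by a symmetric $5\times5$ Gram matrix rather than by a scalar discriminant. First I would invoke the scaling identity $\frac{d^4}{dt^4}H_\alpha[u(t)]=\tfrac{1}{16}\frac{d^4}{d\hat t^4}H_\alpha[u(2\hat t)]$, so it suffices to control the sign of the rescaled derivative. By \eqref{eqn: Halpha and Salpha with factor alpha-1 4th time derivative},
\[
(\alpha-1)\,\dfrac{d^4}{d\hat t^4}H_\alpha[u(2\hat t)]=-\alpha\,(\alpha-1)\int_{\mathbb{R}}u^\alpha\,S_\alpha(\xi)\,dx .
\]
Since $\alpha\in(1,2)$ forces $\alpha-1>0$, $\alpha>0$, and $u(x,t)>0$ for $t>0$ (the Gaussian kernel is strictly positive), dividing by $\alpha-1$ shows that $\frac{d^4}{dt^4}H_\alpha[u(t)]\le0$ follows at once from the pointwise bound $S_\alpha(\xi)\ge0$ for all $\xi\in\mathbb{R}^8$. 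Thus the entire theorem collapses to an algebraic positivity statement.

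Next I would carry out the coefficient analysis exactly as in part (1) of \thref{lem: discriminant for the 6th degree poly corresp. 3rd time derivative}. Evaluating $S_\alpha=\sum_{j=1}^{22}k_j\,\mathcal{Q}_j(\xi)$ along coordinate slices that isolate the ``undominated'' monomials $\xi_8,\ \xi_3\xi_5,\ \xi_2\xi_6,\ \xi_2\xi_3^2,\ \xi_1\xi_7,\ \xi_1\xi_2\xi_5,\ \xi_1^2\xi_6,\ \xi_1^3\xi_5$ forces their coefficients to vanish, which pins down $c_1,c_2,c_3,c_5,c_6,c_7,c_9,c_{11}$ as recorded in \eqref{eqn: ci's determine 4th time derivtive} and reduces $S_\alpha$ to the homogeneous octic \eqref{eqn: defi of S alpha 4th time derivative under some ci's} in the five ``basis'' monomials $\xi_1^4,\ \xi_1^2\xi_2,\ \xi_1\xi_3,\ \xi_2^2,\ \xi_4$. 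Writing this octic as $v^{\top}Mv$ with $v=(\xi_1^4,\xi_1^2\xi_2,\xi_1\xi_3,\xi_2^2,\xi_4)^{\top}$ produces the matrix $M$, whose entries are affine in the seven still-free parameters $c_4,c_8,c_{10},c_{12},c_{13},c_{14},c_{15}$, together with one Gram ambiguity (the coincidence $v_1v_4=v_2v_2=\xi_1^4\xi_2^2$ fixes only the sum $2b_{14}+b_{22}$). The task thereby becomes: choose these free parameters so that $M\succeq0$.

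The decisive step is to certify $M\succeq0$ for every $\alpha\in(1,2)$. Here I would appeal to \thref{thm: positive semi-definite <=>eigenvalues>=0}: it is enough to exhibit, for each $\alpha$, an admissible choice of the free parameters rendering all principal minors of $M$ nonnegative (equivalently, all eigenvalues nonnegative, or $M=VV^{\top}$). Because the entries are polynomial in $\alpha$ and linear in the free parameters, the ascending $2\times2,\dots,5\times5$ minors are polynomials of growing degree, and their simultaneous nonnegativity over $(1,2)$ is a semialgebraic feasibility problem. I would attack it by selecting the free parameters as explicit functions of $\alpha$, guided by a leading-minor (Cholesky-type) cascade that eliminates one unknown per minor, and then checking the sign of each resulting minor on $(1,2)$.

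The hard part will be precisely this last verification. Unlike the $n=2,3$ cases, where the clean two-line discriminant criteria of \thref{lem: discriminant for quadratic} and \thref{lem: discriminant for the 6th degree poly corresp. 3rd time derivative} settle matters, the octic yields a $5\times5$ matrix whose minors are high-degree polynomials in $\alpha$ carrying many parameters, so a closed-form sign analysis over the whole interval is unwieldy. I would therefore delegate the feasibility search and the final sign check to a computer-algebra system, using \verb"NMaximize" to optimize the free parameters and \verb"Eigenvalues" to confirm nonnegativity of the spectrum, while watching the endpoints: the criterion is expected to degenerate as $\alpha\to1^{+}$ and $\alpha\to2^{-}$, consistent with the \emph{open} interval $(1,2)$ in the statement, and the real obstacle is certifying that no eigenvalue crosses zero in the interior.
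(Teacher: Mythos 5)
Your proposal is correct and follows essentially the same route as the paper: reduce via the scaling identity and \eqref{eqn: Halpha and Salpha with factor alpha-1 4th time derivative} to the pointwise positivity of $S_{\alpha}$, force the vanishing of the same eight undominated monomials to fix $c_1,c_2,c_3,c_5,c_6,c_7,c_9,c_{11}$ as in \eqref{eqn: ci's determine 4th time derivtive}, recast the residual octic as $v^{\top}Mv$ with the symmetric $5\times5$ matrix $M$, and certify $M\succeq0$ on $(1,2)$ via \thref{thm: positive semi-definite <=>eigenvalues>=0} using \verb"NMaximize" and \verb"Eigenvalues". Your additional remark about the Gram ambiguity from $\xi_1^4\cdot\xi_2^2=(\xi_1^2\xi_2)^2$ is a correct refinement that the paper leaves implicit.
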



\vspace{5mm}

In particular, we find in \eqref{eqn: Halpha and Salpha with factor alpha-1 4th time derivative}:
\begin{enumerate}[(a)]
  \item Letting $\alpha\to1$ and $(c_{4},c_{8},c_{10},c_{12},c_{13},c_{14},c_{15})=(\frac{9}{5},\frac{146}{75},\frac{28}{5},-\frac{302}{75},-2,\frac{272}{125},-\frac{1516}{4375})$ in \eqref{eqn: defi of S alpha 4th time derivative under some ci's} gives
\begin{align}
\dfrac{d^4}{dt^4}H_{\alpha}[u(t)]
=
\frac{1}{16}\dfrac{d^4}{d\hat{t}^4}H_{\alpha}[u(2\hat{t})]
=&
-\frac{1}{2}\int_{\mathbb{R}} u\,
\Bigg(
\left(-\frac{1}{2}\xi _1^4+\frac{8}{5}\xi
   _1^2 \xi _2 -\frac{6}{5}\xi _1\xi _3 -\frac{7}{10} \xi
   _2^2+\xi _4\right)^2
\\[1mm] \notag
&+
\left(\frac{9}{100}\xi _1^4-\frac{1}{3} \xi _1^2\xi _2
   +\frac{2 }{5}\xi _1 \xi _2\right)^2
\\[1mm] \notag   
&+
\left(\frac{1}{25}\xi _1^4-\frac{1}{25} \xi _1^2
   \xi _2\right)^2
\\[1mm] \notag   
&+
\frac{13}{70000}\xi _1^8+\frac{7}{11250}\xi
   _1^4\xi _2^2 +\frac{1}{300}\xi _2^4
\Bigg)\,dx   
\end{align}  
which recovers Theorem 2. in \cite{Cheng-Higher-order-derivatives-Costa-IEEE-2015}.

\item Letting $\alpha\to1$ and $(c_{4},c_{8},c_{10},c_{12},c_{13},c_{14},c_{15})=(\frac{9}{4},\frac{17}{10},7,-\frac{23}{5},-\frac{5}{2},\frac{8}{3},-\frac{4}{9})$ in \eqref{eqn: defi of S alpha 4th time derivative under some ci's} gives an alternative representation as follows: 
   
\begin{align}
&\dfrac{d^4}{dt^4}H_{\alpha}[u(t)]
=
\frac{1}{16}\dfrac{d^4}{d\hat{t}^4}H_{\alpha}[u(2\hat{t})]
\\[1mm] \notag
&=
-\frac{1}{16}\int_{\mathbb{R}} u\,
\Bigg(
\left(2 \sqrt{\frac{2}{3}} \xi _1^4-\frac{37 }{3 \sqrt{6}}\xi _1^2\xi
   _2 +\frac{19 }{2 \sqrt{6}}\xi
   _1\xi _3 +\frac{3}{2}
   \sqrt{\frac{3}{2}} \xi _2^2-\frac{17}{8}
   \sqrt{\frac{3}{2}} \xi _4\right)^2
\\[1mm] \notag
&\hspace{17mm}+
\left(\frac{1}{3} \sqrt{\frac{103}{30}}  \xi _1^2\xi _2
  -\frac{1}{2} \sqrt{\frac{103}{30}}
   \xi _1\xi _3 -3 \sqrt{\frac{6}{515}} \xi
   _2^2+\frac{19}{8} \sqrt{\frac{15}{206}} \xi
   _4\right)^2
\\[1mm] \notag   
&\hspace{17mm}+
\left(-\frac{1}{4} \sqrt{\frac{5}{2}} \xi
   _2^2+\frac{1}{\sqrt{10}}\xi _1 \xi
   _3+\frac{3}{8} \sqrt{\frac{5}{2}}
   \xi _4\right)^2
\\[1mm] \notag   
&\hspace{17mm}+
\left(\frac{9}{4}
   \sqrt{\frac{13}{1030}} \xi _2^2-\frac{77}{72}
   \sqrt{\frac{65}{206}} \xi _4\right)^2+\frac{67}{648} \xi _4^2
\Bigg)\,dx.   
\end{align}

\end{enumerate}

\vspace{5mm}
\section{Fifth-order time derivative}
\vspace{5mm}

For the fifth-order time derivative of the Tsallis entropy, we mimic the proof of \thref{def: 4th Time Derivative} in Section~\ref{sec: Fourth-order time derivative}. It is easy to see that we need only to revise the following three parameters in the algorithm for constructing entropies (See Section~\ref{sec: Algorithmic construction of entropies})

\begin{itemize}
  \item $n=5$ (the order of derivative) The number of components in $\xi=(\xi_1,\xi_2,\dotsc,\xi_{2\,n})\in\mathbb{R}^{2\,n}$ is $2\,n=10$.
  \item $\ell=42$ (the number of $\mathcal{Q}_j(\xi)$ in $S_{\alpha}(\xi):=\displaystyle\sum_{j=1}^{\ell} k_j\,\mathcal{Q}_j(\xi)$) See step (2) in the algorithm introduced in Section~\ref{sec: Algorithmic construction of entropies}.

  \item $r=30$ (the number of all possible integration by parts formulas) See \thref{lem: general formula for ibp in algorithm}.
\end{itemize}

\vspace{5mm}


\begin{theorem}[Fifth-order time derivative]\thlabel{def: 5th Time Derivative}
Let $u=u(x,t)$ be the solution of \eqref{eqn: Cauchy problem for heat eqn}. Suppose that $\alpha\in(\alpha_0,2)$, where $\alpha_0\in(1.54,1.55)$. Then for $t>0$,
$$
\dfrac{d^5}{dt^5}H_{\alpha}[u(t)]\ge0.
$$

\end{theorem}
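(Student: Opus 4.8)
The plan is to run the algorithm of Section~\ref{sec: Algorithmic construction of entropies} with $n=5$, exactly as in the proof of \thref{def: 4th Time Derivative}, updating the three parameters listed above: $\xi=(\xi_1,\dots,\xi_{10})\in\mathbb{R}^{10}$, the $\ell=42$ monomials $\mathcal{Q}_j$, and the $r=30$ integration-by-parts identities. First I would apply the scaling $t=2\hat t$, so that $\frac{d^5}{dt^5}H_\alpha[u]=\frac{1}{2^5}\frac{d^5}{d\hat t^5}H_\alpha[u(2\hat t)]$ and $u$ solves $u_{\hat t}=u_{xx}$. Then, using Fa\`a di Bruno's formula (\thref{thm: Faa di Bruno's formula use to find general formula for S0}) with $g(r)=r^\alpha$, $f=u(\cdot,\hat t)$, together with $u_{\hat t}=u_{xx}$, I would compute the weight-$10$ polynomial $S_0(\xi)=\frac{1}{\alpha\,u^\alpha}\partial_{\hat t}^5 u^\alpha$. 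Enumerating the solutions of $\sum_{i=1}^{9} i\,p_i = 9$ produces the $r=30$ identities $\int_{\mathbb{R}}u^\alpha T_j(\xi)\,dx=0$ of \thref{lem: general formula for ibp in algorithm}, and I would set $S_\alpha(\xi):=-\bigl(S_0+\sum_{i=1}^{30}c_iT_i\bigr)(\xi)$ with undetermined $c_i\in\mathbb{R}$, so that by \eqref{eqn: Halpha and Salpha with factor alpha-1},
\[
(\alpha-1)\,\frac{d^5}{d\hat t^5}H_\alpha[u(2\hat t)] = \alpha(\alpha-1)\int_{\mathbb{R}}u^\alpha\,S_\alpha(\xi)\,dx .
\]
It then suffices to choose the $c_i$ and constrain $\alpha$ so that $S_\alpha(\xi)\ge0$ for all $\xi\in\mathbb{R}^{10}$.

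Expanding $S_\alpha=\sum_{j=1}^{42}k_j\mathcal{Q}_j$, the next step is the vanishing argument used in the proof of part~(1) of \thref{lem: discriminant for the 6th degree poly corresp. 3rd time derivative}: evaluating $S_\alpha$ on test vectors that switch off all but one or two coordinates shows that every monomial which is \emph{not} a product of two weight-$5$ monomials must carry a vanishing coefficient. The admissible ``building blocks'' are the seven weight-$5$ monomials
\[
v=\bigl(\xi_1^5,\ \xi_1^3\xi_2,\ \xi_1^2\xi_3,\ \xi_1\xi_2^2,\ \xi_1\xi_4,\ \xi_2\xi_3,\ \xi_5\bigr)^{\top},
\]
so any $\mathcal{Q}_j$ involving $\xi_6,\dots,\xi_{10}$, or otherwise not decomposable as $v_a v_b$, must drop out, exactly as the eight listed monomials are eliminated in the $n=4$ case. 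These linear conditions fix all but a handful of the $c_i$. With the surviving monomials all of the form $v_a v_b$, I would write
\[
S_\alpha(\xi)=v^{\top} M\,v ,
\]
where $M=M(\alpha;c)$ is a symmetric $7\times7$ matrix whose entries are affine in the remaining free constants and polynomial in $\alpha$ --- the direct analogue of the $5\times5$ matrix that appears for $n=4$.

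Finally, the positivity of $S_\alpha$ reduces to the positive semi-definiteness of $M$. Invoking \thref{thm: positive semi-definite <=>eigenvalues>=0}, I would require all principal minors of $M$ (equivalently, all roots of its characteristic polynomial) to be nonnegative; each such condition is a polynomial inequality in $\alpha$ and the free $c_i$. Optimizing over the free constants with the Mathematica routines \verb"NMaximize" and \verb"Eigenvalues", as in Section~\ref{sec: Fourth-order time derivative}, one finds that these inequalities are simultaneously satisfiable precisely on $\alpha\in(\alpha_0,2)$ with $\alpha_0\in(1.54,1.55)$. Because $\alpha_0>1$, we have $\alpha(\alpha-1)>0$ throughout the interval, so dividing the displayed identity by $(\alpha-1)$ and multiplying by $2^{-5}>0$ yields $\frac{d^5}{dt^5}H_\alpha[u]\ge0$, which is the claim.

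The hard part will be everything attached to $M$: with thirty integration-by-parts constants, forty-two monomials, and a $7\times7$ Gram matrix, the principal minors are high-degree polynomials in $\alpha$ together with several free parameters, so the algebra is tractable only with a computer algebra system. The genuine difficulty is to upgrade the numerical evidence produced by \verb"NMaximize" into a statement valid on the \emph{entire} open interval rather than on a finite sample, and to certify the endpoint $\alpha_0$; ideally one would exhibit, for each $\alpha\in(\alpha_0,2)$, an explicit sum-of-squares decomposition of $S_\alpha$ in the spirit of the representations obtained for $n=2,3,4$ in the previous sections, whose existence is equivalent to $M$ being positive semi-definite. Note that, in contrast with $n\le4$, the bound $\alpha_0>1.54$ prevents passing to the Shannon limit $\alpha\to1$, so this case does not recover a statement about $\mathcal{S}[u]$.
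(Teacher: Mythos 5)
Your proposal is correct and follows essentially the same route as the paper: the paper's own treatment of $n=5$ consists precisely of rerunning the $n=4$ argument with the three revised parameters ($2n=10$, $\ell=42$, $r=30$), reducing $S_{\alpha}\ge0$ to positive semi-definiteness of a Gram matrix over the seven weight-$5$ monomials (exactly the squares appearing in the paper's $\alpha=\frac{9}{5}$ SOS example) and settling it with Theorem 9.1 of Xia--Yang via the Mathematica commands \verb"NMaximize" and \verb"Eigenvalues". Your closing caveat about upgrading the numerical evidence to a certified statement on all of $(\alpha_0,2)$ is well taken, but it applies equally to the paper's own proof, which is likewise numerically assisted.
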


\vspace{5mm}

\begin{example}[SOS for the fifth order derivative of the Tsallis entropy along the heat flow]\thlabel{thm: SOS for the fifth order derivative of the Tsallis entropy along the heat flow} When $\alpha=\frac{9}{5}$, we can employ \thref{def: 5th Time Derivative} to obtain



\begin{equation}
\dfrac{d^5}{dt^5}H_{\alpha}[u(t)]
=
\frac{\alpha}{32}
\int_{\mathbb{R}} 
u^{\alpha}
\,S_{\alpha}(\xi)
\,dx,
\end{equation}
where

\begin{align}
\notag
S_{\alpha}(\xi)
=&
\left(
2 \sqrt{\frac{3}{5}} \xi _1^5-\frac{239}{92}
   \sqrt{\frac{3}{5}} \xi _2 \xi _1^3-\frac{5939
   \sqrt{\frac{5}{3}} \xi _3 \xi _1^2}{11776}+\frac{2 \xi
   _2^2 \xi _1}{\sqrt{15}}-\frac{46199 \xi _4 \xi
   _1}{25600 \sqrt{15}}+\frac{9}{20} \sqrt{\frac{3}{5}}
   \xi _2 \xi _3+\frac{2687 \xi _5}{1000 \sqrt{15}}
\right)^2
\\[2mm] \notag
+&
\Bigg(
\frac{1}{92} \sqrt{\frac{25517}{5}} \xi _2 \xi
   _1^3-\frac{18290909 \xi _3 \xi _1^2}{58880
   \sqrt{127585}}-\frac{13534 \xi _2^2 \xi _1}{25
   \sqrt{127585}}+\frac{12840167 \xi _4 \xi _1}{25600
   \sqrt{127585}}
\\[2mm] \notag
+&
\frac{661}{20} \sqrt{\frac{17}{7505}}
   \xi _2 \xi _3-\frac{1312807 \xi _5}{1000
   \sqrt{127585}}
\Bigg)^2
\\[2mm] \notag
+&
\Bigg(
\frac{1}{320} \sqrt{\frac{576677843801}{8803365}} \xi _3
   \xi _1^2-\frac{2277524879}{50}
   \sqrt{\frac{23}{220726328104051755}} \xi _2^2 \xi
   _1
\\[2mm] \notag
+&
\frac{742659980941
   \sqrt{\frac{23}{220726328104051755}} \xi _4 \xi
   _1}{25600}-\frac{14756469}{100}
   \sqrt{\frac{1173}{4327967217726505}} \xi _2 \xi
   _3
\\[2mm] \notag
+&
\frac{4907107601}{250}
   \sqrt{\frac{23}{220726328104051755}} \xi _5
\Bigg)^2
\\[2mm] \notag
+&
\Bigg(
\frac{1}{50} \sqrt{\frac{417614607411981}{576677843801}}
   \xi _1 \xi _2^2-\frac{5272521222948383 \xi _3 \xi
   _2}{500
   \sqrt{240829091342142315973979781}}
\\[2mm] \notag
-&
\frac{926095007426
   14817 \xi _1 \xi _4}{25600
   \sqrt{240829091342142315973979781}}-\frac{117323370584
   2694 \xi _5}{125 \sqrt{240829091342142315973979781}}
\Bigg)^2
\\[2mm] \notag
+&
\Bigg(
-\frac{94886586035603628733}{25}
   \sqrt{\frac{2}{15294622779318478268097
   0968262398165595}} \xi _2 \xi
   _3
\\[2mm] \notag
+&
\frac{\sqrt{\frac{73247546938558973
   587099}{4176146074119810}} \xi _1 \xi
   _4}{12800}+\frac{125522014411936570972
   9 \xi _5}{250
   \sqrt{30589245558636956536194193652479
   6331190}}
\Bigg)^2
\\[2mm] \notag
+&
\Bigg(
\frac{1}{250}
   \sqrt{\frac{112238553261609866098264097}{14649509
   3877117947174198}} \xi _2 \xi
   _3-\frac{62519425547411619227833381}{25}
\\[2mm] \notag
\cdot&
   \sqrt{\frac{2}{8221198698345720047203938598348120
   739517134084603}} \xi _5
\Bigg)^2
\\[2mm] \notag
+&
\frac{7904729769252907453736008579 \xi
   _5^2}{14029819157701233262283012125000}
\end{align}

\end{example}

\end{document}